\documentclass{article}
\usepackage{amsmath,amsthm,amsfonts,amssymb,amscd, mathrsfs,mathtools}
\usepackage{latexsym}
\usepackage{amsmath}
\usepackage{amsthm}
\usepackage{amsfonts,subfig,float}
\usepackage{lineno,hyperref,latexsym}
\usepackage[draft,ulem={normalem,normalbf}]{changes}
\usepackage{comment}

\usepackage{enumerate}
 \usepackage[nospace]{cite}
\usepackage{cleveref}
\definechangesauthor[name={Haiying Shan},color=blue]{SHY}
\definechangesauthor[name={Fei Fei, Wang}, color=orange]{WFF}

\usepackage{tikz}

\usetikzlibrary{calc}

\modulolinenumbers[5]

\newtheorem{thm}{Theorem}[section]
\newtheorem{lem}{Lemma}[section]
\newtheorem{definition}{Definition}[section]

\newtheorem{proof*}{proof}[section]
\newtheorem{cor}{Corollary}[section]
\newtheorem{con}{Conjecture}[section]

\newtheorem{question}{Question}
\newtheorem{conj}{Conjecture}

\usepackage{indentfirst}

\title{On some properties of the $\alpha$-spectral radius of the $k$-uniform hypergraph\thanks{Supported by the Fundamental Research Funds for the Central Universities  (No. 22120180254).}}
\author{  Feifei Wang$^1$, Haiying Shan$^2$\thanks{Corresponding author},  Zhiyi Wang\\[5pt]
{School of Mathematical Sciences, Tongji University, Shanghai, P. R. China} }

\begin{document}

\maketitle

\begin{abstract}

In this paper we show how the $\alpha$-spectral radius changes under the edge grafting operations on connected $k$-uniform hypergraphs.
We characterize the extremal hypertree for $\alpha$-spectral radius among $k$-uniform non-caterpillar hypergraphs with given order, size and diameter.
we also characerize the second largest $\alpha$-spectral radius among all $k$-uniform supertrees on $n$ vertices by two methods.

\begin{flushleft}
{\em Keywords：} Hypergraph; $\alpha$-spectral radius; Supertree;  Non-caterpillar supertree \\
{\em MSC:}  05C65; 05C50;15A18 \\
\end{flushleft}
\end{abstract}

\section{Introduction}

Spectral graph theory studies connections between combinatorial properties of graphs and the eigenvalues of matrices associated to the graph, such as the adjacency matrix, the Laplacian matrix and the signless Laplacian matrix. In 2017, the concept of $\mathcal A_\alpha$ matrix of a graph $G$ was put forward by Nikiforov \cite{nikiforov2017merging} which is hybrid of $\mathcal A(G)$ and $\mathcal D(G)$ similar to the signless Laplacian matrix, where $\mathcal A(G), \mathcal D(G)$ are the adjacency matrix and degree matrix of graph $G$, respectively. The study of  $\mathcal A_\alpha$ matrix has started attracting attention of researchers recently (see \cite{nikiforov2018alpha,lin2018note,guo2018alpha}).

For positive integers $n$ and $k$, a real tensor $\mathcal { A } = \left( a _ { i _ { 1 } i _ { 2 } \cdots i _ { k} } \right)$  of order $n$ and dimension $k$
refers to a multidimensional array (also called hypermatrix) with entries $a _ { i _ { 1 } i _ { 2 } \cdots i _ { k} }$   such that $a _ { i _ { 1 } i _ { 2 } \cdots i _ { k } } \in \mathbb { R }$ for all $i _ { 1 } , i _ { 2 } , \ldots , i _ { k } \in [ n ] : = \{ 1,2 , \ldots , n \}$.

In 2005, Qi \cite{MR2178089}  introduced the definition of eigenvalues of a tensor. According to the general product of tensors defined in \cite{shao2013general}, the eigenequation for tensor $\mathcal { A }$ can be written in the following form:
\[
	\mathcal { A }\boldsymbol{x} = \lambda \boldsymbol{x}^{[k-1]}.
\]
$\lambda$ is called an \emph{eigenvalue} of $\mathcal { A }$ , $\boldsymbol{x}$ is called an \emph{eigenvector} of $\mathcal { A }$  corresponding to the eigenvalue $\lambda$, where $\boldsymbol { x } ^ { [ r - 1 ] } = \left( x _ { 1 } ^ { r - 1 } , x _ { 2 } ^ { r - 1 } , \ldots , x _ { n } ^ { r - 1 } \right) ^ { \mathrm { T } }$. The spectral radius of $\mathcal { A }$ is the largest modulus of its eigenvalues, denoted by $\rho(\mathcal { A })$.

Analogous to graph theory, adjacency tensors, Laplacian tensors, and signless Laplacian tensors have been introduced in hypergraph theory (see \cite{cooper2012spectra,MR3194370}).

The adjacency, Laplacian and signless Laplacian tensor of a uniform hypergraph has become the key subject in spectral hypergraph theory. For more details, please refer to \cite{hu2014eigenvectors,qi2017tensor} and references therein.  In \cite{lin2018alpha}, the conception of $\alpha$-spectrum of graph is generalized to hypergraphs.

The edge grafting operation for graphs was usually considered in the study on graph variants (e.g. spectral radius, energy and so on).

In \cite{nikiforov2018alpha}, Nikiforov and Rojo  presented {conjectures and question} concerning $\alpha$-spectral radius and edge grafting operation on a simple graph.
\begin{conj}[see \cite{nikiforov2018alpha} Conjecture 18, Conjecture 19]\label{conj1}
Let $\alpha \in [0, 1)$ and \linebreak $s=0,1$. If $q\geq 1$ and $p\geq q+2$, then
$$\rho_{\alpha}(G_{p,s,q}(u,v))<\rho_{\alpha}(G_{p-1,s,q+1}(u,v)).$$
\end{conj}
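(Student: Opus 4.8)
The plan is to prove Conjecture~\ref{conj1} by comparing characteristic polynomials, in the spirit of the classical Li--Feng lemma on pendant paths, rather than by a one-step edge-grafting/eigenvector-perturbation argument: that argument would reduce the claim to the Perron-entry inequality $x_{v_q}>x_{u_{p-1}}$ (value at the short-path leaf versus the long-path penultimate vertex), and this already fails for some small spiders even though the conjectured inequality still holds there, so a finer argument is needed. Throughout write $\phi_M(\lambda)=\det(\lambda I-\mathcal A_\alpha(M))$ for a connected graph $M$; since $\mathcal A_\alpha(M)$ is symmetric, nonnegative and irreducible, $\rho_\alpha(M)$ is its largest eigenvalue, it is a simple root of the monic polynomial $\phi_M$, and $\phi_M(\lambda)>0$ for every $\lambda>\rho_\alpha(M)$. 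Putting $\rho_0:=\rho_\alpha(G_{p-1,s,q+1}(u,v))$, it therefore suffices to show $\phi_{G_{p,s,q}(u,v)}(\lambda)>0$ for all $\lambda\ge\rho_0$.

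The first ingredient is the pendant-path calculus for $\mathcal A_\alpha$. Let $\psi_m(\lambda)$ be the characteristic polynomial of the $m\times m$ matrix induced by $\mathcal A_\alpha$ on a pendant path on $m$ vertices (tridiagonal, diagonal $2\alpha,\dots,2\alpha,\alpha$, off-diagonal $1-\alpha$), so $\psi_0=1$, $\psi_1=\lambda-\alpha$, $\psi_m=(\lambda-2\alpha)\psi_{m-1}-(1-\alpha)^2\psi_{m-2}$, and let $\pi_m$ be the variant with constant diagonal $2\alpha$ (both are rescaled Chebyshev polynomials). Schur-complementing a pendant path on $m$ vertices attached at $v\in V(G)$ yields the contraction identity
\[
  \phi_{G+P_m\ \mathrm{at}\ v}(\lambda)=\psi_m(\lambda)\,\phi_G(\lambda)-\bigl(\alpha\psi_m(\lambda)+(1-\alpha)^2\psi_{m-1}(\lambda)\bigr)\,\widehat\phi_{G-v}(\lambda),
\]
where $\widehat\phi_{G-v}(\lambda)=\det\bigl((\lambda I-\mathcal A_\alpha(G))_{\widehat v}\bigr)$ is the $v$-th principal cofactor; this is the $\alpha$-deformation of the familiar $\psi_m\phi_G-\psi_{m-1}\phi_{G-v}$, the deformation being the bookkeeping for vertex degrees on the diagonal. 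Applying this twice expresses $\phi_{G_{p,s,q}}$ and $\phi_{G_{p-1,s,q+1}}$ as bilinear forms in $\phi_G,\widehat\phi_{G-v}$ (and, when $s=1$, also $\widehat\phi_{G-u},\widehat\phi_{G-u-v}$) with coefficients that are products of two of the polynomials $\psi_m$.

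The crux is the collapse of $\phi_{G_{p-1,s,q+1}}-\phi_{G_{p,s,q}}$. Using the Christoffel--Darboux/Catalan-type identity $\psi_a\psi_b-\psi_{a+1}\psi_{b-1}=c_b(\lambda)\,\pi_{a-b}(\lambda)$ (valid for $a\ge b$, with $c_b(\lambda)>0$ in the relevant range) and its analogues for the mixed products, every coefficient in the difference telescopes to the single polynomial $\pi_{p-q-2}$, and for $s=0$ one gets
\[
  \phi_{G_{p-1,s,q+1}}(\lambda)-\phi_{G_{p,s,q}}(\lambda)=d(\lambda)\,\pi_{p-q-2}(\lambda)\,\bigl(\phi_G(\lambda)-\lambda\,\widehat\phi_{G-v}(\lambda)\bigr),\qquad d(\lambda)>0,
\]
with an extra term proportional to $\pi_{p-q-3}(\lambda)\bigl(\widehat\phi_{G-v}(\lambda)-\widehat\phi_{G-u}(\lambda)\bigr)$ when $s=1$. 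Now $\pi_{p-q-2}(\lambda)>0$ on $[\rho_0,\infty)$: the hypothesis $p\ge q+2$ makes the index nonnegative, and $\rho_0$ exceeds the largest root of $\pi_{p-q-2}$ because $G_{p-1,s,q+1}$ properly contains a path on more than $p-q-1$ vertices. And $\phi_G(\lambda)-\lambda\widehat\phi_{G-v}(\lambda)<0$ on $[\rho_0,\infty)$: by the $\mathcal A_\alpha$-Schwenk expansion of $\phi_G$ along $v$, it equals $-\alpha\deg_G(v)\,\widehat\phi_{G-v}(\lambda)$ minus a nonnegative combination of principal minors of $\lambda I-\mathcal A_\alpha(G)$ obtained by deleting $v$ together with a neighbour (and, if present, a cycle through $v$), each such minor being the characteristic polynomial of a proper principal submatrix of $\mathcal A_\alpha(G_{p-1,s,q+1})$ and hence positive at every $\lambda\ge\rho_0$ by strict interlacing. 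Since all factors have constant sign on $[\rho_0,\infty)$, the difference is negative there, so $\phi_{G_{p,s,q}}(\lambda)>\phi_{G_{p-1,s,q+1}}(\lambda)\ge0$ for all $\lambda\ge\rho_0$, whence $\rho_\alpha(G_{p,s,q})<\rho_0$, which is the claim.

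The step I expect to be the genuine obstacle is making this collapse go through verbatim for $\mathcal A_\alpha$ and uniformly in $\alpha\in[0,1)$: the $\alpha\mathcal D$ part breaks the translation symmetry of a path (a pendant leaf has degree $1$, not $2$), so the contraction identity, the Christoffel--Darboux identity, and the Schwenk expansion all carry $\alpha$-dependent corrections, and one must verify that these conspire so that the bilinear forms still telescope to a single $\pi_{p-q-2}$ with a sign-definite remainder, and that the extra $\pi_{p-q-3}$-term arising for $s=1$ --- whose bracket $\widehat\phi_{G-v}-\widehat\phi_{G-u}$ is not a priori of one sign --- is controlled; this is exactly where the restriction $s\le 1$ and the standing hypotheses on the configuration $(u,v)$ (which also exclude the degenerate case where $G_{p,s,q}(u,v)$ is a bare path and the two $\alpha$-indices coincide) are used. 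The reductions above run in parallel to the edge-grafting analysis carried out for $k$-uniform hypergraphs in the rest of the paper.
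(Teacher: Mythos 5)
First, a point of orientation: the paper does not prove Conjecture~\ref{conj1} at all --- it is quoted from \cite{nikiforov2018alpha} and immediately attributed to Guo \cite{Guo2018On}, with the hypergraph case for $s=0$ attributed to \cite{guo2018alpha}. What the paper itself proves are hypergraph analogues (Theorems~\ref{thm5} and~\ref{thm10}), and those are proved by exactly the Perron-vector perturbation method you set aside at the outset. Your objection to that method misreads how it is used: nobody needs the entry inequality $x_{v_q}>x_{u_{p-1}}$ to hold unconditionally. The argument is by contradiction --- one assumes $\rho_\alpha(G_{p,s,q})\le\rho_\alpha(G_{p-1,s,q+1})$, and Lemma~\ref{lem 1} extracts the whole chain $x_{u_{p-i}}>x_{v_{q-i-1}}$ \emph{from that assumption}, applying the $2$-switch Corollary~\ref{cor3.2} at the first index where the chain would break; the contradiction is then completed by moving edges at a vertex where the assumed inequality forces the Perron entries into the wrong order. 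So the ``small spider'' examples you mention do not threaten the method, and your stated reason for abandoning it is not valid.

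As for your own route: it is a genuinely different strategy (an $\alpha$-deformed Li--Feng/Christoffel--Darboux computation with characteristic polynomials), but as written it has a real gap exactly where you flag it. The telescoping identity $\phi_{G_{p-1,s,q+1}}-\phi_{G_{p,s,q}}=d(\lambda)\,\pi_{p-q-2}(\lambda)\,(\phi_G-\lambda\widehat\phi_{G-v})$ is asserted, not derived; the $\alpha\mathcal D$ part destroys the translation invariance that makes the classical identity clean (the leaf carries diagonal entry $\alpha$, interior vertices $2\alpha$, and the attachment vertices change degree between the two graphs), so the claim that the bilinear forms in $\psi_a\psi_b$ collapse to a single $\pi_{p-q-2}$ with a sign-definite cofactor is precisely the content of the conjecture and must be computed, not announced. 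For $s=1$ you concede that the extra term proportional to $\widehat\phi_{G-v}-\widehat\phi_{G-u}$ has no a priori sign, which leaves that half of the statement unproved. The sign claim $\pi_{p-q-2}(\lambda)>0$ for $\lambda\ge\rho_0$ also needs an actual estimate: $\pi_m$ is not the $\mathcal A_\alpha$-characteristic polynomial of any subgraph of $G_{p-1,s,q+1}$ (its diagonal is constantly $2\alpha$ including at the ends), so interlacing does not directly give ``$\rho_0$ exceeds its largest root''; you need the explicit bound $2\alpha+2(1-\alpha)\cos(\pi/(p-q-1))<\rho_0$, which is plausible but unargued. Finally, note that this determinantal approach is intrinsically $2$-uniform: it has no straightforward analogue for the order-$k$ tensor $\mathcal A_\alpha$ of a hypergraph, so it could not replace the edge-moving and $2$-switching machinery (Theorem~\ref{thm 4}, Corollary~\ref{cor3.2}) in the setting this paper actually develops.
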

The conjecture is already confirmed by Guo in \cite{Guo2018On}. Furthermore,
Guo in \cite{guo2018alpha} showed that Conjecture \ref{conj1} also holds for connected $k$-uniform hypergraph when $s=0$.

\begin{question}(see \cite{nikiforov2018alpha} Problem 20 ) \label{question1} For which connected graphs G the following statement is true:

Let $\alpha\in[0,1)$ and let $u$ and $v$ be non-adjacent vertices of $G$ of degree at least $2$. If $q\ge 1$ and $p\ge q+2$, then $\rho_{\alpha}(G_{p,q}(u,v))<\rho_{\alpha}(G_{p-1,q+1}(u,v))$.
\end{question}
The above statement is confirmed true by Lin in \cite{lin2018note} for the type of graphs $G_{p,s,q}(u,v)$ when $p-q\ge max\{s+1,2\}$. In this paper,
 we show the statement also true for connected $k$-uniform hypergraph.

A supertree is a hypergraph which is both connected and acyclic.  A supertree is called a caterpillar if the removal of all pendant edges results in a loose path.
Otherwise, it is called a non-caterpillar.
Let $NC(m)$ denote the set of $k$-uniform non-hyper-caterpillars with $m$ edges. Let $C(m,d)$ and $NC(m,d)$ denote, respectively, the set of $k$-uniform hyper-caterpillars and  non-hyper-caterpillars with $m$ edges and diameter $d$.
It is easy to see that if $NC(m,d)$ is nonempty, then  $m\ge 6$ and $4\le d\le m-2$.

The spectral radii of  $k$-uniform supertrees  have been studied by many authors (see \cite{li2016extremal,MR3462996,lin2018note}).
In \cite{yuan2017ordering,xiao2017maximum}, Yuan et al. determined the top ten supertrees with the maximum spectral radii. And in \cite{guo2018alpha}, Guo et al. determined the unique non-hyper-caterpillar with maximum adjacency spectral radius among ${NC}(m,d)$. They use the relationship $\rho(G^k)=\rho^{\frac{2}{k}}(G)$. But up to now, we have not found the relationship between $\rho_{\alpha}(G)$ and $\rho_{\alpha}(G^k)$. So we should use another new methods to research $\rho_{\alpha}(G^k)$.

Note that the supertrees with the first ten largest spectral radii are caterpillar supertrees. It is natural to ask  that:
\begin{enumerate}[(a).]
	\item Whether those supertrees also are supertrees with the maximum $\alpha$-spectral radius;\label{a}
	\item Which non-caterpillar supertrees have the maximum  adjacency spectral or $\alpha$-spectral radius.\label{b}
\end{enumerate}

The remaining of this paper is organized as follows. In Section 2, notations and some definitions about tensors and hypergraphs are given. In Section 3, applying the results about the perturbation of spectral radii of the hypergraph under moving edges and 2-switching operations, we present a generalization  of  the  result  related to Conjecture \ref{conj1} reported in \cite{guo2018alpha}. In Section 4, we determine the  $k$-uniform non-caterpillar supertrees on $n$ vertices in $NC(m,d)$ with the first two largest  $\mathcal{A}_\alpha$ spectral radii in $NC(m,d)$ and $NC(m)$. In Section 5,  we characterize  the supertree with the second largest $\alpha$-spectral radius among all $k$-uniform supertrees on $n$ vertices.

\section{Preliminaries}

Firstly, we introduce some notations and conceptions used in this paper.

Throughout this paper, we focus on simple $k$-uniform hypergraphs with \linebreak $k \geq  3$. A $k$-uniform hypergraph $H=(V(H),E(H))$ consists of the vertex set $V(H)$ (a finite set) and the edge set $E(H)$ which is a collection of $k$-subsets of $V(H)$.

For a subset $U\subset V(H)$, we denote by $E_H[S]$ the set of edges $\{e\in E(H)|S\cap e\neq \varnothing\}$. For a vertex $i\in V(H)$, we simplify $E_H[i]$ as $E_H(i)$. It is the set of edges containing the vertex $i$, i.e., $E_H(i)=\{e\in E(H)|i\in e\}$. The cardinality $|E_H(i)|$ of the set $E_H(i)$ is defined as the degree of the vertex $i$, which is denoted by $\deg_H(i)$, we simplify $\deg_H(i)$ by $d_H(i)$ or $d_i$.
 A vertex of degree one is called a pendant vertex, otherwise it is called non-pendant vertex.  A vertex with
degree 3 or greater is called a branching vertex. A hyperedge $e$ is called pendant  edge if all vertices of $e$, except at most one, are of degree one. A hyperedge $e$ is called branching edge if there are at lease three vertices with degree 2 or greater in $e$.

Let $G = (V,E)$ be a simple graph. The $k$-th power hypergraph of $G$ is the $k$-uniform hypergraph resulting from adding $k-2$ new vertices to each edge of $G$. (see \cite{hu2013cored})

The $k$-th power of an ordinary tree was called a $k$-uniform hypertree in \cite{hu2013cored}. It is clear that any k-uniform hypertree is a supertree.

If $|e_i\cap e_j|\in \{0,s\}$ for all edges $e_i\neq e_j$ of a hypergraph $G$, then $G$ is called an $s$-hypergraph. A simple graph is a $2$-uniform $1$-hypergraph. Note that $1$-hypergraphs here are also called linear hypergraphs \cite{Bretto2013Hypergraph}. So, all power hypergraphs are linear hypergraph. And a supertree is a linear hypergraphs; otherwise, if two edges $e_1,e_2$ have two vertices $v_1,v_2$ in common, then $v_1e_1v_2e_2v_1$ is a cycle.

According to \cite[p.~146]{qi2017tensor}, a loose path (or linear path) $\mathbb{P}$ of length $d$ is a hypergraph with distinct edges $e_1,e_2,\ldots,e_d$  such that:

\[
	|e_i \cap e_{j}|=\begin{cases}
		1, &\text{ if } |i-j|=1,\\
			0, &\text{otherwise}.
	\end{cases}
\]

If $e_{i-1} \cap e_{i}=\{v_i\}$  for $i=1,\ldots,d-1$ and
$v_0 \in e_1\backslash \{v_1\}, v_d \in e_d\backslash \{v_{d-1}\}$, the aforementioned loose path is denoted by $v_0e_1v_1e_2\ldots e_d v_{d}$ or $\mathbb{P}_d$ for brevity. It is easy to see that a loose path with $d$ edges is the $k$-th power of the ordinary path with $d$ edges.

Let $\mathbb{P}$ is a  loose path from $u$ to $v$ in $H$. If
 $\deg_H(u),\deg_H(v) \geq 3$  and $\deg_H(w)=\deg_\mathbb{P}(w)$ for any $w \in V(\mathbb{P})\backslash \{u,v\}$, $\mathbb{P}$ is called an internal path of $H$. If $\deg_H(u)\geq 3$ and $\deg_H(w)=\deg_\mathbb{P}(w)$ for any $w \in V(\mathbb{P})\backslash \{u\}$, $\mathbb{P}$ is called a pendant path of $H$ on vertex $u$.

For $X \subset V ( G )$
and $y \in V ( G ) \backslash X$, the distance between $X$ and $y$, denoted by $d_G(y,X)=\min\{d_G(y,x)|x\in X\}$, the distance $d_G(X,Y)=\min\{d_G(x,y)|x\in X,y\in Y\}$.

\section{section name}

\begin{definition}[\cite{cooper2012spectra}]\label{def3}
The adjacency tensor $\mathcal A(G)=(a_{i_1\dots i_k})$ of a $k$-uniform hypergraph $G$ is defined to be a $k$-th order $n$ dimensional non-negative tensor with entries $a_{i_1\dots i_k}$ such that $$a_{i_1\dots i_k}=
\begin{cases}
\frac{1}{(k-1)!}& \text{if $\{i_1,i_2,\dots,i_k\} \in E(G)$},\\
0& \text{otherwise}.
\end{cases}$$
\end{definition}

\begin{definition}[\cite{lin2018alpha}]
For $0\leq \alpha < 1$, the $\mathcal{A}_\alpha$ tensor of  a $k$-uniform hypergraph $G$ with $V(G)=\{v_1,v_2,\ldots,v_n\}$ is defined as follows:
$$\mathcal A_{\alpha}(G)=\alpha \mathcal D(G)+(1-\alpha)\mathcal A(G).$$
Where $\mathcal D(G)$ is the degree tensor of $G$, which is a $n$-dimensional diagonal
tensor, with the $i$-th diagonal entry as the degree  of vertex $v_i$.
\end{definition}
The definition of $\mathcal{A}_\alpha$ tensor of hypergraph can be viewed as a generalization of adjacency tensor and signless Laplacian tensor of hypergraph.
Obviously,
$$\mathcal A(G)=\mathcal A_0(G) \text{ and } \mathcal Q(G)=2\mathcal A_{\frac{1}{2}}(G).$$

For connected hypergraph $G$, the  $\mathcal{A}_\alpha$ tensor of $G$ always weakly  irreducible nonnegative and symmetric tensor. According to Perron-Frobenius Theorem of tensor, $\mathcal{A}_\alpha$  has a unique positive eigenvector $\boldsymbol{x}$ corresponding to the spectral radius $\rho(\mathcal{A}_\alpha)$ with $\sum _ { i = 1 } ^ { n } x _ { i } ^ { k } = 1$. We say that $\rho(\mathcal{A}_\alpha)$ is the $\alpha$-spectral radius of $G$, denoted by $\rho_\alpha(G)$. Such a positive eigenvector is called the principal eigenvector of $\mathcal{A}_\alpha$.  For more details of the Perron-Frobenius Theorem on nonnegative tensor, see \cite[Chapter ~3]{qi2017tensor} and references therein.

\begin{thm}[see Theorem 2 in \cite{MR3045233}]\label{thm1} Let $\mathcal{A}$ be a nonnegative symmetric tensor of order $k$ and dimension $n$, denote $\mathcal{R}_{+}^n = \{\boldsymbol{x}\in \mathcal{R}^n|\boldsymbol{x}\ge 0\}$. Then we have
\begin{eqnarray}
\rho(\mathcal{A})=max\{\boldsymbol{x}^\mathrm{T}\mathcal{A} \boldsymbol{x}|\boldsymbol{x}\in \mathcal{R}_{+}^n, \sum_{i=1}^n x_i^k=1\}.  \label{3}
\end{eqnarray}
Furthermore,  $ \boldsymbol{x} \in \mathcal{R}_{+}^n $  with $ \sum_{i=1}^n x_i^k=1 $  is an optimal solution of the above optimization problem if and only if it is an eigenvector of $\mathcal{A}$ corresponding to the eigenvalue $\rho(\mathcal{A})$.
\end{thm}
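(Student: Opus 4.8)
The plan is to regard the right‑hand side of \eqref{3} as a constrained optimization problem over the compact set $S_+:=\{\boldsymbol{x}\in\mathcal{R}_+^n:\sum_{i=1}^n x_i^k=1\}$, to show that every maximizer satisfies the tensor eigen‑equation, and then to match the optimal value with $\rho(\mathcal{A})$ by combining this with the Perron--Frobenius theorem for nonnegative tensors. Set $f(\boldsymbol{x})=\boldsymbol{x}^{\mathrm{T}}\mathcal{A}\boldsymbol{x}=\sum_{i_1,\dots,i_k}a_{i_1\cdots i_k}x_{i_1}\cdots x_{i_k}$; this is continuous and homogeneous of degree $k$, so $m:=\max\{f(\boldsymbol{x}):\boldsymbol{x}\in S_+\}$ is attained at some $\boldsymbol{x}^{\ast}\in S_+$ by compactness, and $m\ge 0$ because every term of $f$ is nonnegative on $S_+$.

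First I would extract the eigen‑equation from a maximizer. Since $\mathcal{A}$ is symmetric, a direct computation gives $\partial_j f(\boldsymbol{x})=k(\mathcal{A}\boldsymbol{x})_j$ for every $j$, while $\nabla\!\big(\sum_i x_i^k\big)=k\,\boldsymbol{x}^{[k-1]}$. Let $S=\{i:x^{\ast}_i>0\}$. At $\boldsymbol{x}^{\ast}$ the active constraints are $\sum_i x_i^k=1$ and $x_i=0$ for $i\notin S$; their gradients are linearly independent (the gradient of the first has support exactly $S\neq\varnothing$, disjoint from the supports of the $e_i$, $i\notin S$), so the KKT conditions hold: there exist $\mu\in\mathcal{R}$ and $\lambda_i\ge 0$ with $(\mathcal{A}\boldsymbol{x}^{\ast})_i=\mu (x^{\ast}_i)^{k-1}$ for $i\in S$ and $k(\mathcal{A}\boldsymbol{x}^{\ast})_i=-\lambda_i$ for $i\notin S$. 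For $i\notin S$ the left side is nonnegative (as $\mathcal{A}\ge 0$ and $\boldsymbol{x}^{\ast}\ge 0$) and the right side nonpositive, hence $(\mathcal{A}\boldsymbol{x}^{\ast})_i=0$; therefore $\mathcal{A}\boldsymbol{x}^{\ast}=\mu\,(\boldsymbol{x}^{\ast})^{[k-1]}$, so $\boldsymbol{x}^{\ast}$ is an eigenvector of $\mathcal{A}$, and taking the inner product with $\boldsymbol{x}^{\ast}$ yields $\mu=(\boldsymbol{x}^{\ast})^{\mathrm{T}}\mathcal{A}\boldsymbol{x}^{\ast}=m$. In particular $m$ is an eigenvalue of $\mathcal{A}$ with $m\ge 0$, so $m\le\rho(\mathcal{A})$.

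For the reverse inequality I would invoke the Perron--Frobenius theorem for nonnegative tensors (see \cite{qi2017tensor}): $\rho(\mathcal{A})$ is itself an eigenvalue of $\mathcal{A}$ admitting a nonnegative eigenvector $\boldsymbol{z}$; since the eigen‑equation is homogeneous, $\boldsymbol{z}$ may be rescaled so that $\sum_i z_i^k=1$, and then $\boldsymbol{z}\in S_+$ with $\boldsymbol{z}^{\mathrm{T}}\mathcal{A}\boldsymbol{z}=\rho(\mathcal{A})\sum_i z_i^k=\rho(\mathcal{A})$, whence $m\ge\rho(\mathcal{A})$. Combining the two bounds gives $m=\rho(\mathcal{A})$, which is \eqref{3}. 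The ``furthermore'' part then follows from the same discussion: if $\boldsymbol{x}\in S_+$ is optimal, the argument of the second paragraph (applied with $\boldsymbol{x}^{\ast}=\boldsymbol{x}$) shows it is an eigenvector with eigenvalue $f(\boldsymbol{x})=m=\rho(\mathcal{A})$; conversely, if $\boldsymbol{x}\in S_+$ satisfies $\mathcal{A}\boldsymbol{x}=\rho(\mathcal{A})\boldsymbol{x}^{[k-1]}$, then $f(\boldsymbol{x})=\rho(\mathcal{A})\sum_i x_i^k=\rho(\mathcal{A})=m$, so $\boldsymbol{x}$ is optimal.

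I expect the main obstacle to be the treatment of maximizers lying on the boundary of $S_+$, i.e.\ those with $x^{\ast}_i=0$ for some $i$: there the naive Lagrange condition need not hold, so one must either verify the constraint qualification and use the full KKT system as above, or replace that step by a one‑sided perturbation argument showing that slightly increasing a vanishing coordinate cannot raise $f$ unless $(\mathcal{A}\boldsymbol{x}^{\ast})_i=0$. The only other point requiring care is to cite the form of the Perron--Frobenius theorem valid for arbitrary, possibly reducible, nonnegative tensors, since $\mathcal{A}$ is not assumed to be weakly irreducible here.
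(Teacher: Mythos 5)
The paper does not prove this statement at all: it is imported verbatim as Theorem 2 of the cited reference \cite{MR3045233} and used as a black box, so there is no internal proof to compare against. Your argument is, as far as I can check, a correct self-contained reconstruction of the standard proof of that cited result. The two directions are handled soundly: the KKT step is legitimate because LICQ holds at any maximizer (the gradient $k(\boldsymbol{x}^{\ast})^{[k-1]}$ of the norm constraint is supported on $S\neq\varnothing$, disjoint from the $e_i$ with $i\notin S$), the multiplier signs you wrote are the right ones for maximization with $x_i\ge 0$, and nonnegativity of $\mathcal{A}$ then forces $(\mathcal{A}\boldsymbol{x}^{\ast})_i=0$ on the zero coordinates, so the eigenequation holds in every component and $\mu=m\le\rho(\mathcal{A})$; symmetry is used exactly where needed, in $\partial_j f=k(\mathcal{A}\boldsymbol{x})_j$. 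The reverse inequality and the ``furthermore'' equivalence are also correct. The only external ingredient is the one you flagged yourself: you must cite the Perron--Frobenius theorem in the form valid for general, possibly reducible, nonnegative tensors (Yang--Yang; see also the book \cite{qi2017tensor}), since the weakly irreducible version stated in this paper's preliminaries does not apply to an arbitrary nonnegative symmetric $\mathcal{A}$; with that citation made precise, your proof is complete and essentially coincides with the argument in the original reference.
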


For a vector $\boldsymbol{x}$ of dimension $n$ and subset $U\subseteq \left[n\right]$, we write $$x_U=\prod_{i\in U} x_i.$$
From Definition \ref{def3}, using the general product of tensors defined by Shao in \cite{shao2013general}, we have

 \begin{eqnarray}
 &\boldsymbol{x}^T \mathcal{A}(G)\boldsymbol{x}=\sum_{e\in E(G)} k x_e.\nonumber \\
 &\boldsymbol{x}^T\mathcal{A}_\alpha(G)\boldsymbol{x}=\sum_{e\in E(G)}(\alpha \sum_{u\in e}x_u^k+(1-\alpha)kx_e). \label{eq:shao_spe}
 \end{eqnarray}

Let $\boldsymbol{x}$ be the principal eigenvector of $\mathcal A_{\alpha}(G)$.  By Theorem \ref{thm1}, the eigenequation of $\mathcal{A}_\alpha(G)$ for $\rho(\mathcal{A}_\alpha(G))$ can be written as follows:

\begin{equation}\label{eq:eigncomp}
	\rho_{\alpha}(G)x_v^{k-1}=\alpha d_vx_v^{k-1}+(1-\alpha)\sum_{e\in E_G(v)}x_{e\backslash\{v\}}.
\end{equation}

\begin{thm}
(see \cite{guo2018alpha}) For $k\ge 2$, let $G$ be a $k$-uniform hypergraph with $u,v_1,\dots,v_r\in V(G)$ and $e_1,\dots,e_r\in E(G)$ for $r\ge 1$ such that $u \notin e_i$ and $v_i\in e_i$ for $i=1,\dots,r$, where $v_1,\dots,v_r$ are not necessarily distinct. Let $e'_i=(e_i\backslash \{v_i\})\cup \{u\}$ for $i=1,\dots,r$. Suppose that $e'_i\notin E(G)$ for $i=1,\dots,r$. Let $G'=G-\{e_1,\dots,e_r\}+\{e'_1,\dots,e'_r\}$. Let $\boldsymbol{x}$ be the $\alpha$-Perron vector of $G$. If $x_u\ge max\{x_{v_1},\dots,x_{v_r}\}$, then $\rho_{\alpha}(G')> \rho_{\alpha}(G)$. \label{thm 4}
\end{thm}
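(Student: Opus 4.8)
The plan is to use the Rayleigh-type variational characterization from Theorem \ref{thm1} together with the eigenequation \eqref{eq:eigncomp} for the $\alpha$-Perron vector $\boldsymbol{x}$ of $G$. Since $\boldsymbol{x}$ is a nonnegative unit vector (in the $\ell^k$ sense), it is a feasible test vector for $\rho_\alpha(G')$, so by Theorem \ref{thm1} we have $\rho_\alpha(G') \ge \boldsymbol{x}^{\mathrm T}\mathcal A_\alpha(G')\boldsymbol{x}$. The strategy is therefore to estimate the difference $\boldsymbol{x}^{\mathrm T}\mathcal A_\alpha(G')\boldsymbol{x} - \boldsymbol{x}^{\mathrm T}\mathcal A_\alpha(G)\boldsymbol{x}$ from below and show it is nonnegative, and then argue the inequality is in fact strict.

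First I would write, using \eqref{eq:shao_spe}, that
\begin{equation}\label{eq:diff}
\boldsymbol{x}^{\mathrm T}\mathcal A_\alpha(G')\boldsymbol{x} - \boldsymbol{x}^{\mathrm T}\mathcal A_\alpha(G)\boldsymbol{x} = \sum_{i=1}^{r}\Bigl(\alpha\bigl(x_u^k - x_{v_i}^k\bigr) + (1-\alpha)k\bigl(x_{e_i'} - x_{e_i}\bigr)\Bigr),
\end{equation}
because $G'$ and $G$ differ only in that each $e_i$ is replaced by $e_i' = (e_i\setminus\{v_i\})\cup\{u\}$; note the replacement is legitimate since $u\notin e_i$ and $e_i'\notin E(G)$. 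For each $i$, since $x_{e_i} = x_{v_i}\, x_{e_i\setminus\{v_i\}}$ and $x_{e_i'} = x_u\, x_{e_i\setminus\{v_i\}}$, the second term equals $(1-\alpha)k\, x_{e_i\setminus\{v_i\}}(x_u - x_{v_i})$, which is $\ge 0$ because the entries of $\boldsymbol{x}$ are positive and $x_u \ge x_{v_i}$; the first term $\alpha(x_u^k - x_{v_i}^k)$ is likewise $\ge 0$ for the same reason. Hence each summand in \eqref{eq:diff} is nonnegative, giving $\boldsymbol{x}^{\mathrm T}\mathcal A_\alpha(G')\boldsymbol{x} \ge \boldsymbol{x}^{\mathrm T}\mathcal A_\alpha(G)\boldsymbol{x} = \rho_\alpha(G)$, and combined with the variational bound, $\rho_\alpha(G') \ge \rho_\alpha(G)$.

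To upgrade this to a strict inequality, I would argue by contradiction: if $\rho_\alpha(G') = \rho_\alpha(G)$, then $\boldsymbol{x}$ attains the maximum in \eqref{3} for $\mathcal A_\alpha(G')$, so by the ``furthermore'' part of Theorem \ref{thm1}, $\boldsymbol{x}$ is an eigenvector of $\mathcal A_\alpha(G')$ for $\rho_\alpha(G')$; since $G'$ is connected, $\boldsymbol{x}$ is (a scalar multiple of) its principal eigenvector and in particular has all positive entries. Writing the eigenequation \eqref{eq:eigncomp} for both $G$ and $G'$ at the vertex $u$ (whose degree strictly increases, and into whose star the edges $e_i'$ are added) and subtracting, the equality $\rho_\alpha(G)=\rho_\alpha(G')$ forces a relation among positive quantities that cannot hold — e.g. the extra terms $\alpha r\, x_u^{k-1} + (1-\alpha)\sum_i x_{e_i\setminus\{v_i\}}$ on the $G'$ side are strictly positive. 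The main obstacle is making this last contradiction step fully rigorous: one must track carefully how \eqref{eq:eigncomp} changes at $u$, at each $v_i$, and at every other vertex of the modified edges, and rule out the degenerate possibility that all the relevant $x$-values conspire; the cleanest route is probably to compare the eigenequations at a vertex $w$ (such as $u$) where one can exhibit a strict gain and use positivity of $\boldsymbol{x}$ on $G'$ to close the argument, rather than to analyze \eqref{eq:diff} summand by summand for equality cases.
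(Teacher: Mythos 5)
Your proposal is correct, and it is essentially the standard argument for this statement (the paper itself does not prove Theorem~\ref{thm 4}; it is quoted from \cite{guo2018alpha}, where exactly this route is taken): use Theorem~\ref{thm1} with the $\alpha$-Perron vector of $G$ as a test vector, compute the Rayleigh difference edge by edge to get $\rho_\alpha(G')\ge\rho_\alpha(G)$, and in the equality case invoke the ``furthermore'' part of Theorem~\ref{thm1} to conclude $\boldsymbol{x}$ is an eigenvector of $\mathcal A_\alpha(G')$ and compare eigenequations. The only comment is that the final step you flag as ``the main obstacle'' is in fact immediate and needs no tracking of $v_i$ or of the other vertices of the modified edges: since $u\notin e_i$ for all $i$, we have $E_G(u)\subseteq E(G')$ and $d_{G'}(u)=d_G(u)+r$, so subtracting \eqref{eq:eigncomp} for $G$ from \eqref{eq:eigncomp} for $G'$ at the single vertex $u$ (both valid for $\boldsymbol{x}$ with the common eigenvalue) yields
\[
0=\alpha r\,x_u^{k-1}+(1-\alpha)\sum_{i=1}^{r}x_{e_i\backslash\{v_i\}}>0,
\]
a contradiction, because $\boldsymbol{x}>0$ already as the Perron vector of the connected hypergraph $G$ (no appeal to connectedness of $G'$, nor to $\boldsymbol{x}$ being its principal eigenvector, is needed), $r\ge 1$ and $\alpha<1$.
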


\begin{cor}\label{cor1}
Let $u_1,u_2$ are non-pendant vertices in an edge of connected uniform hypergraph $H$ with $|E_H(u_i)\backslash E_H(\{u_1,u_2\})|\ge 1$ for $i=1,2$. Let $H'$ be the hypergraph obtained from $H$ by moving edges $E_H(u_2)\backslash E_H(\{u_1,u_2\})$ from $u_2$ to $u_1$ and $H\ncong H'$, then $$\rho_{\alpha}(H)<\rho_{\alpha}(H').$$
\end{cor}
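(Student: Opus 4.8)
The plan is to realize $H'$ as the result of applying Theorem \ref{thm 4} (edge-moving) to $H$, with $u$ taken to be whichever of $u_1, u_2$ carries the larger value in the $\alpha$-Perron vector $\boldsymbol{x}$ of $H$. First I would fix $\boldsymbol{x}$ to be the principal eigenvector of $\mathcal{A}_\alpha(H)$ and, without loss of generality (by symmetry of the two labels), assume $x_{u_1} \ge x_{u_2}$. Set $r = |E_H(u_2)\setminus E_H(\{u_1,u_2\})|\ge 1$ and enumerate those edges as $e_1,\dots,e_r$; each $e_i$ contains $u_2$ but not $u_1$. Put $v_i = u_2$ and $e_i' = (e_i\setminus\{u_2\})\cup\{u_1\}$ for every $i$, so that $H' = H - \{e_1,\dots,e_r\} + \{e_1',\dots,e_r'\}$ is exactly the hypergraph obtained by moving these edges from $u_2$ to $u_1$. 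Since $x_{u_1}\ge x_{u_2} = \max\{x_{v_1},\dots,x_{v_r}\}$, Theorem \ref{thm 4} gives $\rho_\alpha(H') > \rho_\alpha(H)$ the moment we have verified its hypotheses, which is the inequality we want.

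The remaining work is therefore to check the two structural hypotheses of Theorem \ref{thm 4}: that $u_1 \notin e_i$ for each $i$, and that $e_i' \notin E(H)$ for each $i$. The first is immediate from the definition of the set $E_H(u_2)\setminus E_H(\{u_1,u_2\})$, whose edges by construction avoid $u_1$. For the second, suppose some $e_i' = (e_i\setminus\{u_2\})\cup\{u_1\}$ already lies in $E(H)$. Since $u_1$ and $u_2$ lie together in a common edge of $H$ (the hypothesis), and $e_i$ is an edge through $u_2$, I would examine the intersection pattern among the common edge, $e_i$, and $e_i'$: the edges $e_i$ and $e_i'$ share the $k-1$ vertices $e_i\setminus\{u_2\}$, so if both were present in $H$ we would already contradict linearity — but $H$ need not be assumed linear here, so instead I would argue that $e_i' \in E(H)$ together with $u_1 \in e_i'$ would make $e_i'$ one of the edges counted in $E_H(u_1)$; if moreover $u_2 \in e_i'$ it would be counted in $E_H(\{u_1,u_2\})$, but $u_2 \notin e_i'$, so this case needs the hypothesis $H \ncong H'$ to rule out the degenerate situation. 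Concretely, the cleanest route is: if every $e_i'$ that could collide is actually already an edge, then moving produces no new edge, $H' = H$ as multisets of edges, contradicting $H \ncong H'$; and if only some collide, one restricts the list $e_1,\dots,e_r$ to those indices $i$ with $e_i'\notin E(H)$ (this sublist is nonempty, again by $H\ncong H'$, since otherwise $H'=H$), and applies Theorem \ref{thm 4} to that sublist — moving a sub-collection of edges from $u_2$ to $u_1$ still increases $\rho_\alpha$, and the final hypergraph after moving only the non-colliding edges is still a valid intermediate hypergraph whose $\alpha$-spectral radius is strictly below that of $H'$...

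Here I should be careful: moving only the non-colliding edges need not land exactly on $H'$, so a slightly better framing is to observe that $H'$ itself is obtained from $H$ by deleting $\{e_1,\dots,e_r\}$ and adding $\{e_1',\dots,e_r'\}$ where duplicates among the $e_i'$ (and coincidences with surviving edges) simply collapse, so $H'$ has at most $m$ edges; then one verifies that the genuine edge set of $H'$ differs from that of $H$ precisely because $H \ncong H'$ forces at least one new edge $e_{i_0}'\notin E(H)$ to appear, and one applies Theorem \ref{thm 4} with the index set $\{i : e_i' \notin E(H)\}$, noting that the resulting hypergraph coincides with $H'$ as an unweighted simple hypergraph. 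I expect the main obstacle to be exactly this bookkeeping — cleanly matching the simple-hypergraph ``move edges'' operation of the corollary with the multiset ``delete/add'' formalism of Theorem \ref{thm 4}, and using the hypothesis $H\ncong H'$ (rather than linearity, which is not assumed) to guarantee that at least one edge in the chosen sublist is genuinely new so that strict inequality applies. Once the index set is correctly pared down, the eigenvalue inequality is a direct invocation of Theorem \ref{thm 4}; no new estimates are needed.
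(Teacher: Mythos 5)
Your overall strategy---invoke Theorem \ref{thm 4} with the target vertex taken to be whichever of $u_1,u_2$ carries the larger Perron entry---is the same as the paper's, but your reduction ``WLOG $x_{u_1}\ge x_{u_2}$ by symmetry of the two labels'' is not justified, and this is exactly where the content of the corollary lies. The hypotheses are symmetric in $u_1,u_2$, but the conclusion is not: $H'$ is by definition the hypergraph obtained by moving edges \emph{to} $u_1$. If $x_{u_2}>x_{u_1}$, your argument only yields $\rho_{\alpha}(H)<\rho_{\alpha}(H'')$, where $H''$ is obtained by moving $E_H(u_1)\backslash E_H(\{u_1,u_2\})$ from $u_1$ to $u_2$; to convert this into the stated inequality for $H'$ you must observe that $H'\cong H''$ (the map exchanging $u_1$ and $u_2$ and fixing all other vertices is an isomorphism). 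That observation is precisely the one nontrivial step in the paper's proof, which concludes $\rho_{\alpha}(H)<\max\{\rho_{\alpha}(H'),\rho_{\alpha}(H'')\}=\rho_{\alpha}(H')$. As written, your proposal never mentions $H''$ or the isomorphism, so the case $x_{u_2}>x_{u_1}$ is left unproved.

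Your treatment of the hypothesis $e_i'\notin E(H)$ also ends on an incorrect claim: if you apply Theorem \ref{thm 4} only to the indices $i$ with $e_i'\notin E(H)$, the resulting hypergraph still contains every colliding edge $e_j$ (those with $e_j'\in E(H)$), whereas $H'$ does not, so it does not ``coincide with $H'$ as a simple hypergraph''; in that degenerate situation $H'$ has strictly fewer edges than $H$ and the stated operation is not the one Theorem \ref{thm 4} covers, nor does the hypothesis $H\ncong H'$ repair this. The paper simply does not address collisions; in its applications $H$ is a supertree, hence linear, and then $e_i'\in E(H)$ is impossible because $e_i$ and $e_i'$ would share the $k-1\ge 2$ vertices of $e_i\backslash\{u_2\}$. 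The clean fix is to note this (or to read ``moving edges'' as presupposing that no moved edge coincides with an existing one), rather than to pare down the index set.
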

\begin{proof}Let $H''$ be the hypergraph obtained from $H$ by moving edges \linebreak$E_H(u_1)\backslash E_H(\{u_1,u_2\})$ from $u_1$ to $u_2$. Moreover, $H'\cong H''$. It is obvious that either $x_{u_1}\ge x_{u_2}$ or $x_{u_2}\ge x_{u_1}$ holds, thus either $H'$ or $H''$ satisfies the conditions of Theorem \ref{thm 4}. So we can apply Theorem \ref{thm 4}, we have  $\rho_{\alpha}(H)<max\{\rho_{\alpha}(H'),\rho_{\alpha}(H'')\}=\rho_{\alpha}(H')$.
\end{proof}

\section{The generalization of edge grafting theorem for hypergraph on $\mathcal{A}_\alpha$ spectral radius}

Consider a connected $k$-uniform hypergraph $G$ with  two vertices (not necessarily distinct) $u,v \in V(G)$. Let {$G_{u,v}(p,q)$} be the hypergraph obtained by attaching the pendant hyperpaths $\mathbb{P}_p$ to $u$ and $\mathbb{P}_q$ to $v$ (See Fig. \ref{fig:grafting}).

\begin{figure}
\centering
\includegraphics[page=1,width=.45\textwidth]{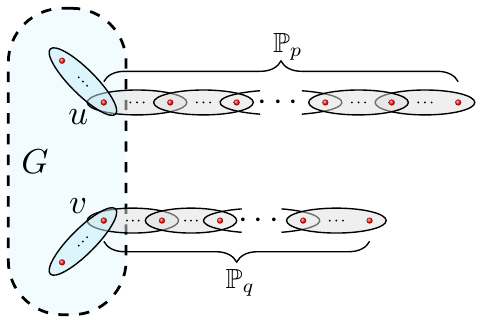}
 \caption{The hypergraph $G_{u,v}(p,q)$}
 \label{fig:grafting}
\end{figure}

For non-negative integers $a,b,c,d$ with $a+b=c+d$, we say
that the graph $G_{u,v}(c,d)$ is obtained from $G_{u,v}(a,b)$ by an
edge grafting operation on the two relevant pendant paths of
$G_{u,v}(a,b)$.
The edge grafting operation for graphs was usually considered in the study on graph variants.

The 2-switch operation is a useful tool for study in graph theory, especially in terms of degree sequence. In \cite{xiao2017maximum,guo2018alpha}, the perturbation of the adjacency spectral radius and $\alpha$-spectral radius  of a hypergraph under 2-switch operation  is studied.

\begin{definition}[2-switching operation, see \cite{xiao2017maximum}]
Let $e_1,e_2$ be two edges of $k$-uniform hypergraph $G=(V,E)$. If $U_1,U_2$ are $r$-subsets of $e_1,e_2$ respectively with $1 \le r <k $
 and $k$-sets $e_1^{\prime}=(e_1\cup U_2)\backslash U_1 ,e_2^{\prime}=(e_2\cup U_1)\backslash U_2 \notin E(G)$. Let $E^{\prime}=(E\cup \{e_1^{\prime},e_2^{\prime}\})\backslash \{e_1,e_2\}$. Then we say that hypergraph $G^{\prime}=(V,E^{\prime})$ is obtained from $G$ by 2-switching operation $e_1 \xrightleftharpoons[U_2]{\;U_1} e_2$.
\end{definition}

It is clear that the degrees of the vertices remain unchanged when a 2-switch is applied to a hypergraph. The following theorem illustrates the effect of 2-switch operation on the $\alpha$-spectral radius of hypergraph.

\begin{thm}\label{thm 1}
Let $G$ be a connected $k$-uniform hypergraph and $e, f \in E(G)$.  Let $U_1\subset e, U_2=e\backslash U_1, V_1\subset f, V_2=f\backslash V_1$ with $1\le |U_1|=|V_1|\le k-1$. Suppose that $e'=U_1\cup V_2$ and $f'=V_1\cup U_2$ are $k$-subsets of $V(G)$ and not in  $E(G)$.  Let $G'=G-\{e,f\}+\{e',f'\}$.
Let $\boldsymbol{x}$ be the principal eigenvector of $\mathcal A_{\alpha}(G)$.
If $x_{U_1}\ge x_{V_1}$ and $x_{U_2}\le x_{V_2}$, then $\rho_{\alpha}(G)\le \rho_{\alpha}(G')$.
Moreover, the equality holds iff  $x_{U_2}=x_{V_2}$ and $x_{U_1}=x_{V_1}$.
\end{thm}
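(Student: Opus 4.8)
The plan is to use the Rayleigh-quotient characterization of $\rho_\alpha$ from Theorem~\ref{thm1}: since $\boldsymbol{x}$ is the principal eigenvector of $\mathcal A_\alpha(G)$ with $\sum_i x_i^k = 1$, we have $\rho_\alpha(G) = \boldsymbol{x}^T\mathcal A_\alpha(G)\boldsymbol{x}$, while $\rho_\alpha(G') \ge \boldsymbol{x}^T\mathcal A_\alpha(G')\boldsymbol{x}$ because $\boldsymbol{x}$ is a feasible (but not necessarily optimal) vector for $G'$. Hence it suffices to show $\boldsymbol{x}^T\mathcal A_\alpha(G')\boldsymbol{x} \ge \boldsymbol{x}^T\mathcal A_\alpha(G)\boldsymbol{x}$. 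First I would observe that the 2-switch leaves all degrees unchanged, so the diagonal term $\alpha\sum_{g\in E}\sum_{w\in g}x_w^k$ is identical for $G$ and $G'$; only the off-diagonal part $(1-\alpha)\,k\sum_{g} x_g$ can change, and there only the two edges $\{e,f\}$ versus $\{e',f'\}$ matter.

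The key computation is then to compare $x_e + x_f$ with $x_{e'} + x_{f'}$. Writing $a = x_{U_1}$, $b = x_{U_2}$, $c = x_{V_1}$, $d = x_{V_2}$ (products of entries over the respective vertex subsets, which are well-defined since $U_1,U_2$ partition $e$ and $V_1,V_2$ partition $f$, and the four subsets are pairwise disjoint as $e\ne f$), we have $x_e = ab$, $x_f = cd$, $x_{e'} = x_{U_1}x_{V_2} = ad$, $x_{f'} = x_{V_1}x_{U_2} = cb$. So the difference is
\[
(x_{e'} + x_{f'}) - (x_e + x_f) = ad + bc - ab - cd = (a-c)(d-b).
\]
By hypothesis $x_{U_1}\ge x_{V_1}$ means $a\ge c$, and $x_{U_2}\le x_{V_2}$ means $b\le d$, so $(a-c)\ge 0$ and $(d-b)\ge 0$, giving $(a-c)(d-b)\ge 0$. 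Therefore $\boldsymbol{x}^T\mathcal A_\alpha(G')\boldsymbol{x} = \boldsymbol{x}^T\mathcal A_\alpha(G)\boldsymbol{x} + (1-\alpha)k(a-c)(d-b) \ge \boldsymbol{x}^T\mathcal A_\alpha(G)\boldsymbol{x}$, and the inequality $\rho_\alpha(G)\le\rho_\alpha(G')$ follows.

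For the equality case, the main subtlety is the usual one with this Rayleigh-quotient argument: $\rho_\alpha(G)=\rho_\alpha(G')$ forces $(a-c)(d-b)=0$, i.e. $x_{U_1}=x_{V_1}$ or $x_{U_2}=x_{V_2}$, and it also forces $\boldsymbol{x}$ to be an optimal vector for $G'$, hence (by the second part of Theorem~\ref{thm1}) an eigenvector of $\mathcal A_\alpha(G')$ for $\rho_\alpha(G')$. Then I would compare the eigenequations \eqref{eq:eigncomp} for $G$ and for $G'$ at a suitable vertex — for instance a vertex $w\in U_1$ (or $U_2$) where the local edge set changes — to deduce that both products must in fact be equal: the two eigenequations differ precisely by terms like $x_{e\backslash\{w\}}$ versus $x_{e'\backslash\{w\}}$, and matching them (using positivity of $\boldsymbol{x}$ and that $\rho_\alpha(G)=\rho_\alpha(G')$) forces $x_{U_2}=x_{V_2}$ and then $x_{U_1}=x_{V_1}$. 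The converse direction is immediate: if $x_{U_1}=x_{V_1}$ and $x_{U_2}=x_{V_2}$ then $x_e+x_f = x_{e'}+x_{f'}$, so $\boldsymbol{x}$ achieves the same Rayleigh quotient on $G'$, making it the principal eigenvector there and $\rho_\alpha(G')=\rho_\alpha(G)$. I expect the forward direction of the equality characterization — squeezing out both equalities from a single scalar vanishing together with the eigenequation comparison — to be the only place requiring genuine care; the inequality itself is the short factoring identity above.
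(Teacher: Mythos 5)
Your proof is essentially the paper's own: the identical Rayleigh-quotient comparison with the factorization $(x_{U_1}-x_{V_1})(x_{V_2}-x_{U_2})\ge 0$, and the same componentwise comparison of the eigenequations of $G$ and $G'$ (the paper's Formula \eqref{eq:4}) for the equality characterization. Two details deserve tightening. First, $U_1,U_2,V_1,V_2$ need not be pairwise disjoint just because $e\ne f$; what the hypotheses actually give (via the requirement that $e'$ and $f'$ be $k$-subsets) is $U_1\cap V_2=\varnothing$ and $V_1\cap U_2=\varnothing$, which is exactly what is needed for $x_{e'}=x_{U_1}x_{V_2}$, $x_{f'}=x_{V_1}x_{U_2}$ and for the preservation of degrees; the possible overlaps $U_1\cap V_1$ and $U_2\cap V_2$ are harmless. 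Second, your converse of the equality statement is circular as phrased: a nonnegative unit vector attaining the value $\rho_\alpha(G)$ in the Rayleigh quotient of $G'$ is optimal for $G'$ only if $\rho_\alpha(G')=\rho_\alpha(G)$, which is what is to be shown. The repair uses the same eigenequation comparison you already invoke in the forward direction: if $x_{U_1}=x_{V_1}$ and $x_{U_2}=x_{V_2}$, then all componentwise differences vanish, so $\mathcal A_\alpha(G')\boldsymbol{x}=\mathcal A_\alpha(G)\boldsymbol{x}=\rho_\alpha(G)\boldsymbol{x}^{[k-1]}$; thus $\boldsymbol{x}$ is a positive eigenvector of the nonnegative tensor $\mathcal A_\alpha(G')$, and by the Collatz--Wielandt/Perron--Frobenius theory its eigenvalue equals $\rho(\mathcal A_\alpha(G'))$, giving $\rho_\alpha(G')=\rho_\alpha(G)$. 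With these two repairs your argument coincides with (and in the forward equality direction is slightly cleaner than) the proof in the paper.
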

\begin{proof}
By Theorem \ref{thm1}  and Formula \eqref{eq:shao_spe}, we have
\begin{align*}
&\rho_{\alpha}(G')-\rho_{\alpha}(G)\\
\ge&\boldsymbol{x}^T\mathcal A_{\alpha}(G')\boldsymbol{x}-\boldsymbol{x}^T\mathcal A_{\alpha}(G)\boldsymbol{x}\\
=&k(1-\alpha)(x_{U_1}x_{V_2}+x_{U_2}x_{V_1}-x_{U_1}x_{U_2}-x_{V_1}x_{V_2})\\
=&k(1-\alpha)(x_{U_1}-x_{V_1})(x_{V_2}-x_{U_2})\\
\ge &0.
\end{align*}
So, $\rho_{\alpha}(G)\le \rho_{\alpha}(G')$.

From Formula \eqref{eq:eigncomp} we have:
\begin{equation}\label{eq:4}
	\mathcal A_{\alpha}(G')x_v^{k-1}-\mathcal A_{\alpha}(G)x_v^{k-1}= \begin{cases}
0 & \text{ if $v \notin e\cup f$};  \\
x_{V_1\backslash\{v\}}(x_{U_2}-x_{V_2}) & \text{ if $v \in V_1$;}   \\
x_{V_2\backslash\{v\}}(x_{U_1}-x_{V_1}) &  \text{ if $v \in V_2$;}  \\
x_{U_1\backslash\{v\}}(x_{V_2}-x_{U_2}) & \text{ if $v \in U_1$;}   \\
x_{U_2\backslash\{v\}}(x_{V_1}-x_{U_1}) &  \text{ if $v \in U_1$.}  \\
\end{cases}
\end{equation}

Since $\rho_{\alpha}(G)\le \rho_{\alpha}(G')$, we have $\mathcal A_{\alpha}(G')\boldsymbol{x}\geq \mathcal A_{\alpha}(G)\boldsymbol{x}$
and the  equality holds iff $\rho_{\alpha}(G)= \rho_{\alpha}(G')$.

By Perron-Frobenious Theorem， $\boldsymbol{x}$ is positive vector. Then from Formula \eqref{eq:4}, we have $\rho_{\alpha}(G)=\rho_{\alpha}(G')$ if and only if $x_{U_1}=x_{V_1}, x_{U_2}=x_{V_2}$.

\end{proof}

The following corollaries can be derived from Theorem \ref{thm 1}.
\begin{cor}[\cite{guo2018alpha}]\label{thm 2}
Let $G$ be a connected $k$-uniform hypergraph with $k\ge 2$, and $e$ and $f$ be two edges of $G$ with $e\cap f=\varnothing$. Let $\boldsymbol{x}$ be the principal eigenvector of $\mathcal A_{\alpha}(G)$. Let $U\subset e$ and $V\subset f$ with $1\le |U|=|V|\le k-1$. Let $e'=U\cup (f\backslash V)$ and $f'=V\cup (e\backslash U)$. Suppose that $e',f'\notin E(G)$. Let $G'=G-\{e,f\}+\{e',f'\}$. If $x_U\ge x_V,\ x_{e\backslash U}\le x_{f\backslash V}$ and one is strict, then $\rho_{\alpha}(G)<\rho_{\alpha}(G')$.
\end{cor}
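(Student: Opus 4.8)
The plan is to reduce Corollary \ref{thm 2} to a direct application of Theorem \ref{thm 1}, after handling the case where the hypotheses of Theorem \ref{thm 1} hold only with equalities. First I would set $U_1 = U$, $U_2 = e \backslash U$, $V_1 = V$, $V_2 = f \backslash V$. Since $e \cap f = \varnothing$, we have $e' = U_1 \cup V_2 = U \cup (f \backslash V)$ and $f' = V_1 \cup U_2 = V \cup (e \backslash U)$, which match the corollary's definitions, and $|U_1| = |V_1|$ with $1 \le |U_1| \le k-1$. The hypotheses $x_U \ge x_V$ and $x_{e \backslash U} \le x_{f \backslash V}$ are exactly $x_{U_1} \ge x_{V_1}$ and $x_{U_2} \le x_{V_2}$, so Theorem \ref{thm 1} applies and yields $\rho_\alpha(G) \le \rho_\alpha(G')$.

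It remains to upgrade this to a strict inequality under the extra assumption that one of $x_U \ge x_V$, $x_{e \backslash U} \le x_{f \backslash V}$ is strict. Here I would argue by contradiction: if $\rho_\alpha(G) = \rho_\alpha(G')$, then the equality clause of Theorem \ref{thm 1} forces $x_{U_1} = x_{V_1}$ and $x_{U_2} = x_{V_2}$, i.e. $x_U = x_V$ and $x_{e \backslash U} = x_{f \backslash V}$. This contradicts the assumption that one of the two inequalities is strict. Hence $\rho_\alpha(G) < \rho_\alpha(G')$.

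The only real point requiring care is checking that $e', f' \notin E(G)$ is genuinely needed as a hypothesis (it is assumed in the corollary) and that the disjointness $e \cap f = \varnothing$ is what makes the set identities $e' = U \cup (f \backslash V)$ and $f' = V \cup (e \backslash U)$ hold with the correct cardinality $k$ — if $e$ and $f$ overlapped, $U_1 \cup V_2$ could have fewer than $k$ elements. Since this is essentially a specialization, there is no substantial obstacle; the main thing to verify is simply that all the structural conditions of Theorem \ref{thm 1} are met by the specialized data, which is routine given $e \cap f = \varnothing$.
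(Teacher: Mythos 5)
Your proof is correct and matches the paper's intent: the paper offers no separate argument for Corollary \ref{thm 2}, simply noting that it can be derived from Theorem \ref{thm 1}, and your specialization $U_1=U$, $U_2=e\backslash U$, $V_1=V$, $V_2=f\backslash V$ together with the equality clause of Theorem \ref{thm 1} is exactly that derivation. The observation that $e\cap f=\varnothing$ guarantees $e'$ and $f'$ are genuine $k$-sets is the right point of care, and the strictness argument by contradiction is sound.
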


\begin{cor}\label{cor3.2}Let $G$ be a connected $k$-uniform hypergraph and $\boldsymbol{x}$ be the principal eigenvector of $\mathcal A_{\alpha}(G)$. Suppose that $e,f \in E(G)$ such that $$\{u_1,u_2\} \subset e, \{v_1,v_2\} \subset f \text{  and  }x_{u_1} > x_{v_1}, x_{u_2} \le x_{v_2}.$$
If
$u_i$ is not adjacent to $v_j$ in $G$ for any $i,j \in \{1,2\}$, then there exist $k$-subsets $e',f'$ of $V(G)$ with  $\{u_1,v_2\} \subset e', \{u_2,v_1\} \subset f'$ such that:
$$\rho_{\alpha}(G) < \rho_{\alpha}(G'),$$
where $G'=G-\{e,f\}+\{e',f'\}$.
\end{cor}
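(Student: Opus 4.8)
The plan is to derive Corollary~\ref{cor3.2} from Theorem~\ref{thm 1} by choosing the sets $U_1, U_2, V_1, V_2$ appropriately, treating the single-vertex subsets $\{u_1\}\subset e$ and $\{v_1\}\subset f$ as the distinguished "transferred" parts. First I would set $U_1=\{u_1\}$ and $V_1=\{v_1\}$, so that $|U_1|=|V_1|=1$, and put $U_2=e\setminus\{u_1\}$, $V_2=f\setminus\{v_1\}$. Then define $e'=U_1\cup V_2=\{u_1\}\cup(f\setminus\{v_1\})$ and $f'=V_1\cup U_2=\{v_1\}\cup(e\setminus\{u_1\})$. By construction $\{u_1,v_2\}\subset e'$ since $v_2\in f\setminus\{v_1\}$ (here we use that $v_1\ne v_2$, which follows from $x_{u_1}>x_{v_1}$ and $x_{u_2}\le x_{v_2}$ being required simultaneously together with the non-adjacency hypotheses — I should check this degenerate case carefully), and similarly $\{u_2,v_1\}\subset f'$.

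The key verification is that $e'$ and $f'$ are genuine $k$-subsets of $V(G)$ not already in $E(G)$, and that the monotonicity hypotheses of Theorem~\ref{thm 1} hold. For the former, the non-adjacency assumption does the work: since $u_1$ is not adjacent to $v_1$ or $v_2$, the vertex $u_1$ lies in neither $f$ nor any edge meeting $f$ in the relevant way; more to the point, $u_1\notin f$ (as $u_1$ adjacent to $v_1\in f$ would be forced otherwise, or $u_1\in f$ directly makes $u_1,v_1$ adjacent), so $e'=\{u_1\}\cup(f\setminus\{v_1\})$ has exactly $k$ distinct elements. Symmetrically $v_1\notin e$, so $f'$ has $k$ elements. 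That $e',f'\notin E(G)$ follows because $e'$ contains the pair $\{u_1,v_2\}$ which is non-adjacent in $G$, hence no edge of $G$ contains it; likewise $f'$ contains $\{u_2,v_1\}$. Here I would need to handle the borderline possibility that $e'=f$ or $f'=e$ or $e'=e$, etc., but each of these again forces an unwanted adjacency. For the monotonicity, $x_{U_1}=x_{u_1}>x_{v_1}=x_{V_1}$ gives the strict inequality $x_{U_1}>x_{V_1}$, and $x_{U_2}=x_{e\setminus\{u_1\}}$ versus $x_{V_2}=x_{f\setminus\{v_1\}}$ — this is the delicate point, since Theorem~\ref{thm 1} wants $x_{U_2}\le x_{V_2}$, i.e.\ $x_{e\setminus\{u_1\}}\le x_{f\setminus\{v_1\}}$, which is \emph{not} literally the hypothesis $x_{u_2}\le x_{v_2}$.

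The main obstacle, then, is bridging the gap between the hypothesis "$x_{u_2}\le x_{v_2}$" and the requirement "$x_{e\setminus\{u_1\}}\le x_{f\setminus\{v_1\}}$" needed to invoke Theorem~\ref{thm 1}. The resolution I would pursue is to not insist on moving all of $e\setminus\{u_1\}$ at once: instead take $U_1$ and $V_1$ to be slightly larger, namely $U_1=e\setminus\{u_2\}$ and $V_1=f\setminus\{v_2\}$ (each of size $k-1$), with $U_2=\{u_2\}$, $V_2=\{v_2\}$. Then $x_{U_2}=x_{u_2}\le x_{v_2}=x_{V_2}$ holds directly. The price is that we now need $x_{U_1}\ge x_{V_1}$, i.e.\ $x_{e\setminus\{u_2\}}\ge x_{f\setminus\{v_2\}}$, which again is a product inequality rather than the pointwise $x_{u_1}>x_{v_1}$ — so this reformulation trades one gap for the mirror gap. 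The honest fix is to combine both perspectives via a two-step argument or to strengthen the bookkeeping: apply Theorem~\ref{thm 1} with $U_1=\{u_1\}$, $V_1=\{v_1\}$ and show the required inequality $x_{e\setminus\{u_1\}}\le x_{f\setminus\{v_1\}}$ is \emph{not} actually needed for a strict conclusion as long as $x_{U_1}>x_{V_1}$ strictly and $x_{U_2}\le x_{V_2}$ — rereading the proof of Theorem~\ref{thm 1}, the Rayleigh-quotient difference is $k(1-\alpha)(x_{U_1}-x_{V_1})(x_{V_2}-x_{U_2})$, and with $U_2=e\setminus\{u_1\}$, $V_2=f\setminus\{v_1\}$ one cannot directly sign $x_{V_2}-x_{U_2}$. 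So the cleanest route is: first choose the decomposition so that the factor involving single vertices is the one constrained by hypothesis, i.e.\ take $U_1=\{u_2\}\subset e$ with the roles of $e,f$ arranged so that Theorem~\ref{thm 1}'s inequalities read $x_{\{u_2\}}\le x_{\{v_2\}}$ and $x_{e\setminus\{u_2\}}\ge x_{f\setminus\{v_2\}}$ — and then argue the product inequality $x_{e\setminus\{u_2\}}\ge x_{f\setminus\{v_2\}}$ by noting we are free, when it fails, to instead move edges via $u_1$ using Corollary~\ref{cor1}-style reasoning, since $x_{u_1}>x_{v_1}$. In the write-up I would therefore split into cases on whether $x_{e\setminus\{u_2\}}\ge x_{f\setminus\{v_2\}}$ or the reverse, applying Theorem~\ref{thm 1} with the decomposition $(U_1,U_2,V_1,V_2)=(\{u_2\},e\setminus\{u_2\},\{v_2\},f\setminus\{v_2\})$ in the first case and $(U_1,U_2,V_1,V_2)=(e\setminus\{u_1\},\{u_1\},f\setminus\{v_1\},\{v_1\})$ in the second, verifying in each case that the resulting $e',f'$ still contain the prescribed pairs $\{u_1,v_2\}$ and $\{u_2,v_1\}$, and checking strictness comes from the strict hypothesis $x_{u_1}>x_{v_1}$ via the equality condition of Theorem~\ref{thm 1}.
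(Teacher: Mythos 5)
Your final plan (the split on whether $x_{e\setminus\{u_2\}}\ge x_{f\setminus\{v_2\}}$) is structurally the same as the paper's proof, which also applies Theorem \ref{thm 1} with a two-way case split, but your particular split leaves a genuine gap: strictness in your first case. There the two inequalities you feed into Theorem \ref{thm 1} are $x_{u_2}\le x_{v_2}$ and $x_{e\setminus\{u_2\}}\ge x_{f\setminus\{v_2\}}$; neither of them involves the strict hypothesis $x_{u_1}>x_{v_1}$, and both can hold with equality simultaneously (take $x_{u_2}=x_{v_2}$ and $x_{u_1}x_{e\setminus\{u_1,u_2\}}=x_{v_1}x_{f\setminus\{v_1,v_2\}}$, which is compatible with $x_{u_1}>x_{v_1}$ provided $x_{e\setminus\{u_1,u_2\}}<x_{f\setminus\{v_1,v_2\}}$). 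In that boundary situation the equality characterization in Theorem \ref{thm 1} gives only $\rho_{\alpha}(G)=\rho_{\alpha}(G')$, so your claim that ``strictness comes from $x_{u_1}>x_{v_1}$ via the equality condition'' does not go through: that hypothesis simply never enters the two inequalities you are using. (Your second case is sound: from $x_{e\setminus\{u_2\}}<x_{f\setminus\{v_2\}}$ and $x_{u_1}>x_{v_1}$ one deduces $x_{e\setminus\{u_1,u_2\}}<x_{f\setminus\{v_1,v_2\}}$, hence $x_{e\setminus\{u_1\}}<x_{f\setminus\{v_1\}}$, and the decomposition $U_1=\{u_1\}$, $V_1=\{v_1\}$ gives a strict drop. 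Your verification via non-adjacency that $e',f'$ are $k$-sets containing the prescribed pairs and not in $E(G)$ is also correct, as is the observation that the reversed orientation of the inequalities is harmless after exchanging the roles of the $U_i$ and $V_i$.)

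The paper avoids this problem by splitting on the $(k-2)$-vertex products instead: if $x_{e\setminus\{u_1,u_2\}}\ge x_{f\setminus\{v_1,v_2\}}$ it takes $U_1=e\setminus\{u_2\}$, $U_2=\{u_2\}$, $V_1=f\setminus\{v_2\}$, $V_2=\{v_2\}$, so that $x_{U_1}=x_{u_1}x_{e\setminus\{u_1,u_2\}}>x_{v_1}x_{f\setminus\{v_1,v_2\}}=x_{V_1}$ is strict while $x_{U_2}=x_{u_2}\le x_{v_2}=x_{V_2}$; otherwise it takes $U_1=\{u_1\}$, $U_2=e\setminus\{u_1\}$, $V_1=\{v_1\}$, $V_2=f\setminus\{v_1\}$ (essentially your case 2). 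In either case the strict factor $x_{u_1}>x_{v_1}$ is built into one of the two product inequalities, so Corollary \ref{thm 2} yields the strict conclusion $\rho_{\alpha}(G)<\rho_{\alpha}(G')$. Your argument can be repaired either by adopting that split, or by adding a sub-case to your case 1: when $x_{u_2}=x_{v_2}$ and $x_{e\setminus\{u_2\}}=x_{f\setminus\{v_2\}}$, the hypothesis $x_{u_1}>x_{v_1}$ forces $x_{e\setminus\{u_1,u_2\}}<x_{f\setminus\{v_1,v_2\}}$, hence $x_{e\setminus\{u_1\}}<x_{f\setminus\{v_1\}}$, and you may switch to the second decomposition to recover strictness. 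As written, however, the proposal does not prove the strict inequality asserted in Corollary \ref{cor3.2}.
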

\begin{proof}Let \[
	e'= \begin{cases}
	e\cup \{v_2\}\backslash \{u_2\} & \text{if $x_{e\backslash \{u_1,u_2\}}\ge x_{f\backslash \{v_1,v_2\}}$ ,}\\
	f\cup \{u_1\}\backslash \{v_1\} & \text{otherwise};
	\end{cases} $$  and $$f'= \begin{cases}
	f\cup  \{u_2\}\backslash  \{v_2\} & \text{if $x_{e\backslash \{u_1,u_2\}}\ge x_{f\backslash \{v_1,v_2\}}$ ,}\\
	e\cup \{v_1\}\backslash \{u_1\} & \text{otherwise}.
	\end{cases}
\]

If $x_{e\backslash \{u_1,u_2\}}\ge x_{f\backslash \{v_1,v_2\}}$, then take\\
$$U_1=e\backslash \{u_2\}, U_2=\{u_2\},\qquad V_1=f\backslash \{v_2\}, V_2=\{v_2\}.$$
If $x_{e\backslash \{u_1,u_2\}}< x_{f\backslash \{v_1,v_2\}}$, then take
$$U_1=\{u_1\}, U_2=e\backslash \{u_1\},\qquad  V_1=\{v_1\}, V_2=f\backslash \{v_1\}.$$
In either case, we always have
$$e=U_1 \cup U_2, f=V_1\cup V_2, e'=U_1\cup V_2, f'=V_1\cup U_2 $$ and $$x_{U_1}> x_{V_1}, x_{U_2}\le x_{V_2}.$$
Since $u_i$ is not adjacent to $v_j$ in $G$ for any $i,j \in \{1,2\}$, we have $e',f' \notin E(G)$.
Take $G'=G-\{e,f\}+\{e',f'\}$.
From Theorem \ref{thm 2}, we have $\rho_{\alpha}(G') > \rho_{\alpha}(G).$
\end{proof}

The Corollary \ref{cor3.2} is a key ingredient of the proof of main results.

\begin{lem} \label{lem 1}
Let $u e_1 u_1\dots u_p e_{p+1} u_{p+1}$ and $v f_1 v_1 \dots v_{q-2} f_{q-1} v_{q-1}$ be two pendant paths in $G_{u,v}(p+1,q-1)$ at $u$ and $v$, respectively. Let $\boldsymbol{x}$ be the principal eigenvector of $\mathcal A_{\alpha}(G_{u,v}(p+1,q-1))$.
 If $\rho_{\alpha}(G_{u,v}(p,q))\le \rho_{\alpha}(G_{u,v}(p+1,q-1))$, then $x_{u_{p-i}}>x_{v_{q-i-1}}$ for $i=0,\dots,q-1$,
where $u_0=u,v_0=v$.
\end{lem}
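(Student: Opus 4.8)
The plan is to compare the two pendant paths termwise using the eigenequation \eqref{eq:eigncomp} together with the hypothesis $\rho_{\alpha}(G_{u,v}(p,q))\le \rho_{\alpha}(G_{u,v}(p+1,q-1))$. Write $\rho = \rho_\alpha(G_{u,v}(p+1,q-1))$ for brevity. First I would set up the key observation that $G_{u,v}(p,q)$ is obtained from $G_{u,v}(p+1,q-1)$ by a single edge grafting operation moving the outermost edge of the $\mathbb{P}_{p+1}$-path (the edge $e_{p+1}$ incident to the pendant vertex $u_{p+1}$) over to the other end-vertex $v_{q-1}$ of the $\mathbb{P}_{q-1}$-path. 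Applying Corollary \ref{cor1} (or directly Theorem \ref{thm 4}) in the contrapositive form: if we had $x_{v_{q-1}} \ge x_{u_p}$, the grafting that creates $G_{u,v}(p,q)$ from $G_{u,v}(p+1,q-1)$ would strictly increase the $\alpha$-spectral radius, contradicting the hypothesis. Hence $x_{u_p} > x_{v_{q-1}}$, which is the base case $i=0$ is off by one — more precisely this handles the ``outermost'' comparison, and the real induction runs inward along the two paths.

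Next I would run a finite downward induction on $i$ (equivalently, move from the pendant ends toward $u$ and $v$). Suppose $x_{u_{p-j}} > x_{v_{q-j-1}}$ has been established for all $j < i$; I want $x_{u_{p-i}} > x_{v_{q-i-1}}$. The tool is the pair of eigenequations at the two ``twin'' vertices $u_{p-i+1}$ and $v_{q-i}$ sitting one step closer to the pendant ends, which by inductive hypothesis already satisfy $x_{u_{p-i+1}} > x_{v_{q-i}}$. Writing \eqref{eq:eigncomp} at each of these degree-$2$ path vertices expresses $\rho\, x^{k-1}$ (up to the $\alpha d x^{k-1}$ term, with $d=2$ on both sides so it cancels in the comparison) as $(1-\alpha)$ times the sum of two monomials $x_{e\setminus\{w\}}$, one monomial reaching toward the pendant end and one reaching back toward $u$ (resp. $v$). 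The pendant-side monomials are comparable by the induction hypothesis; subtracting the two eigenequations and isolating the back-toward-$u$/$v$ monomials forces $x_{(e_{p-i+1}\setminus\{u_{p-i+1}\})}$-type products involving $x_{u_{p-i}}$ to dominate the corresponding product involving $x_{v_{q-i-1}}$. Since on a loose path the relevant edge is $\{u_{p-i}, u_{p-i+1}\}$ together with $k-2$ pendant (degree-one) vertices whose $x$-values are equal on both paths (they satisfy the same scalar eigenequation $\rho\, x^{k-1} = \alpha x^{k-1} + (1-\alpha) x_{e\setminus\{\cdot\}}$ and hence are determined by their unique non-pendant neighbour's value), the product inequality collapses to $x_{u_{p-i}} > x_{v_{q-i-1}}$ after cancelling the common positive factors. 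One must be careful that at the last step $i=q-1$ the vertices $u_{p-q+1}$ and $v_0=v$ may have degree larger than $2$ (indeed $v$, and possibly $u$, are branching vertices of $G$), so the $\alpha d\, x^{k-1}$ terms and the extra edges of $E_G(u), E_G(v)$ do not cancel cleanly; there the argument needs the additional input that $u$ and $v$ carry the \emph{same} attachment $G$, so the ``$G$-side'' contributions to the two eigenequations are structurally identical and any discrepancy is again pushed onto the path-edge monomial.

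The main obstacle I anticipate is exactly this bookkeeping of the degree-one ``decorator'' vertices on each loose-path edge and making the cancellation rigorous: one needs the small lemma that on a pendant loose path the value $x_w$ at a degree-one vertex $w$ in edge $e$ is a fixed increasing function of $x_{w'}$, where $w'$ is the unique non-pendant vertex of $e$ nearest to the path, and that this function is the \emph{same} for the two paths since it depends only on $\rho$, $\alpha$ and $k$. Granting that, every comparison of edge-products $x_{e\setminus\{w\}}$ reduces to a comparison of two vertex values, and the induction goes through. A secondary subtlety is the strictness: I would track that each step preserves \emph{strict} inequality, using that $\rho > \alpha$ (so $(1-\alpha) \ne 0$ and the path cannot be constant) and that the principal eigenvector is strictly positive by Perron–Frobenius, so no monomial ever vanishes and strict input yields strict output.
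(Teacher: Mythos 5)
Your base case coincides with the paper's: if $x_{v_{q-1}}\ge x_{u_p}$, then moving $e_{p+1}$ from $u_p$ to $v_{q-1}$ produces a hypergraph isomorphic to $G_{u,v}(p,q)$ with strictly larger $\alpha$-spectral radius by Theorem \ref{thm 4}, contradicting the hypothesis. The inductive step, however, has a genuine gap. Your reduction of the decorator vertices is fine: the common value $y_e$ of the degree-one vertices of a path edge $e$ with non-pendant vertices $a,b$ satisfies $y_e^2=\frac{1-\alpha}{\rho-\alpha}x_ax_b$, so in the variables $z_w=x_w^{k/2}$ the eigenequation at an interior degree-$2$ path vertex becomes a three-term linear recurrence $(\rho-2\alpha)z_{u_{p-i+1}}=(1-\alpha)c\,(z_{u_{p-i}}+z_{u_{p-i+2}})$ with the same constant $c$ on both paths. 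But subtracting the two recurrences gives
\begin{equation*}
(\rho-2\alpha)\bigl(z_{u_{p-i+1}}-z_{v_{q-i}}\bigr)=(1-\alpha)c\Bigl[\bigl(z_{u_{p-i}}-z_{v_{q-i-1}}\bigr)+\bigl(z_{u_{p-i+2}}-z_{v_{q-i+1}}\bigr)\Bigr],
\end{equation*}
and knowing that the left-hand side and the second bracketed difference are positive tells you nothing about the sign of the first bracketed difference, which is exactly what you need; ``isolating the back-toward-$u$/$v$ monomial'' does not close the step. Two symptoms of the problem: your induction uses the hypothesis $\rho_{\alpha}(G_{u,v}(p,q))\le\rho_{\alpha}(G_{u,v}(p+1,q-1))$ only in the base case, whereas the conclusion needs it at every index; and the outermost comparison already pairs a degree-$2$ vertex ($u_p$) with a degree-$1$ vertex ($v_{q-1}$), so the two paths do not even satisfy the same boundary equations.

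The paper's inductive step is a different idea entirely: take the smallest $j$ with $x_{u_{p-j}}\le x_{v_{q-j-1}}$; then $x_{u_{p-j+1}}>x_{v_{q-j}}$, and Corollary \ref{cor3.2} applied to the edges $e_{p-j+1}$ and $f_{q-j}$ yields a tail-swapping $2$-switch whose result is isomorphic to $G_{u,v}(p,q)$ and has strictly larger $\alpha$-spectral radius, contradicting the hypothesis. If you insist on the recurrence route, you would have to control the sign of $z_{u_{p-i}}-z_{v_{q-i-1}}$ using both boundary conditions of the linear recurrence (for instance via a monotone-differences induction together with a lower bound on $\frac{\rho-2\alpha}{(1-\alpha)c}$), none of which is established in the paper or in your sketch; the $2$-switch argument sidesteps all of this.
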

\begin{proof}
Assume, by contradiction, that there exists some $i \in \{0,1,\ldots,q-1\}$ such that $x_{u_{p-i}}\leq x_{v_{q-i-1}}$.  Let $j$ be the smallest number of them.

When $j=0$, let $H$ be the $k$-uniform hypergraph obtained from $G_{u,v}(p+1,q-1)$ by moving $e_{p+1}$ from $u_p$ to $v_{q-1}$. Since $H\cong G_{u,v}(p,q)$,  we have $\rho_{\alpha}(G_{u,v}(p,q))=\rho_{\alpha}(H)>\rho_{\alpha}(G_{u,v}(p+1,q-1))$ by Theorem \ref{thm 4}, a contradiction.

When $j \geq 1$, we have $x_{u_{p-j}} \leq x_{v_{q-j-1}}$ and $x_{u_{p-j+1}}>x_{v_{q-j}}$.

According to Corollary \ref{cor3.2}, there exist $e',f'$ such that\\ $\{v_{q-j-1},u_{p-j+1} \} \subset e'$,  $\{u_{p-j},v_{q-j}\}\subset f'$ and $\rho_{\alpha}(G_{u,v}(p+1,q-1)) < \rho_{\alpha}(G')$,\\ where $G'=G-\{e_{p-j+1},f_{q-j}\}+\{e',f'\}\cong G_{u,v}(p,q)$.
This contradicts the condition $\rho_{\alpha}(G_{u,v}(p,q))\le \rho_{\alpha}(G_{u,v}(p+1,q-1))$.
\end{proof}

\begin{thm}\label{thm5}
Let $u,v$ be two non-pendant vertices of hypergraph $G$. If there exist an internal path $\mathbb{P}$ with $s$ length in hypergraph $G_{u,v}(p,q)$ for any $p\ge q\ge 1$,
then we have
 $$\rho_{\alpha}(G_{u,v}(p+1,q-1))<\rho_{\alpha}(G_{u,v}(p,q))\text{\qquad  for } p-q+1\ge s\geq 0.$$
\end{thm}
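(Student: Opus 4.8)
The plan is to derive the statement from Lemma~\ref{lem 1} by means of a single unconditional domination estimate for the principal eigenvector. I would argue by contradiction: suppose $\rho_\alpha(G_{u,v}(p+1,q-1))\ge\rho_\alpha(G_{u,v}(p,q))$ and let $\boldsymbol{x}$ be the principal eigenvector of $\mathcal A_\alpha(G_{u,v}(p+1,q-1))$, with the two pendant paths written $ue_1u_1\cdots u_pe_{p+1}u_{p+1}$ and $vf_1v_1\cdots f_{q-1}v_{q-1}$ as in Lemma~\ref{lem 1}. Applying Lemma~\ref{lem 1} and then specializing to $i=q-1$ yields the one inequality I aim to contradict,
$$x_{u_{p-q+1}}>x_{v_0}=x_v .$$
Write the internal path as $\mathbb{P}\colon u=w_0,w_1,\dots,w_s=v$; since $u$ and $v$ are non-pendant in $G$, the vertices $w_1,\dots,w_{s-1}$ have degree $2$ in $G_{u,v}(p+1,q-1)$ and $v$ is incident to at least one further edge of $G$.

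The core of the proof is the claim that $x_v\ge x_{u_{p-q+1}}$. This contradicts the displayed inequality, and since that inequality is exactly the conclusion of Lemma~\ref{lem 1}, we deduce $\rho_\alpha(G_{u,v}(p+1,q-1))<\rho_\alpha(G_{u,v}(p,q))$, as required. To prove $x_v\ge x_{u_{p-q+1}}$ I would combine two monotonicity properties of $\boldsymbol{x}$. First: along a loose path of degree-$2$ vertices issuing from a fixed vertex, the entries of $\boldsymbol{x}$ strictly decrease toward the far end — this comes from \eqref{eq:eigncomp} after substituting for the $k-2$ pendant leaves of each such edge — and, more to the point, the entries stay larger at every vertex of the path when the rooted branch attached at the far end is the heavier one. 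Second: the branch beyond $v$, namely the pendant path of length $q-1$ at $v$ together with at least one more edge of $G$, dominates the branch beyond $u_{p-q+1}$, which is merely a pendant path of length $q$; indeed a pendant path of length $q-1$ with an extra pendant edge at its root has strictly larger $\alpha$-spectral radius than a pendant path of length $q$, a Li--Feng-type comparison obtainable from Theorem~\ref{thm 4} and Corollary~\ref{cor3.2} (after first settling the degenerate case $u=v$). Since $v$ lies at distance $s\le p-q+1$ from $u$, whereas $u_{p-q+1}$ lies at distance $p-q+1$, and the branch carried by $v$ is the heavier one, these two facts together give $x_v\ge x_{u_{p-q+1}}$.

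The branch-domination step is where I expect the real work to lie, and it is exactly there that the hypotheses are used: the inequality $p-q+1\ge s$ is what makes the two attachment distances compare in the right direction, and the non-pendancy of $v$ is what guarantees a genuine extra branch at $v$. A tidy way to organize it is to isolate an auxiliary lemma — \emph{replacing the rooted branch at the far end of a loose path by one of larger $\alpha$-spectral radius cannot decrease any entry of the principal eigenvector} — together with the elementary fact that $B(q-1,1)$ (two pendant paths, of lengths $q-1$ and $1$, at a common vertex) has larger $\alpha$-spectral radius than $B(q,0)$ (a single pendant path of length $q$). As a sanity check, for $q=1$ the argument collapses completely: $G_{u,v}(p,1)$ arises from $G_{u,v}(p+1,0)$ by relocating the last pendant edge from $u_p$ to $v$, and since $v$ is at distance $s\le p$ from $u$ and carries beyond it the same (single pendant edge) branch as $u_p$, one gets $x_v\ge x_{u_p}$ and applies Theorem~\ref{thm 4} directly; the general-$q$ statement is the bootstrap of this. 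Alternatively, one could extend the eigenvector chain of Lemma~\ref{lem 1} around the corner at $u$ into $\mathbb{P}$ by repeatedly $2$-switching the pendant tail at $u$ against $\mathbb{P}$ via Corollary~\ref{cor3.2}; that route trades the domination lemma for an induction on $(p-q+1)-s$ comparing the $2$-switched hypergraph with $G_{u,v}(p,q)$.
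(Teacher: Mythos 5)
Your setup is fine — the contradiction hypothesis, the two pendant paths, and the extraction of $x_{u_{p-q+1}}>x_v$ from Lemma~\ref{lem 1} all match the paper — but the step that carries the entire proof, namely the ``unconditional'' estimate $x_v\ge x_{u_{p-q+1}}$, is not established, and the tools you invoke for it do not exist in the paper and are not routine for $\mathcal A_\alpha$ tensors of hypergraphs. Your auxiliary lemma (``replacing the rooted branch at the far end of a loose path by one of larger $\alpha$-spectral radius cannot decrease any entry of the principal eigenvector'') is a cross-graph, entrywise eigenvector comparison; with the normalization $\sum_i x_i^k=1$ such monotonicity is delicate at best (making one part of the hypergraph heavier typically shifts mass toward it and depresses entries elsewhere), and even if granted it does not obviously yield the intra-graph comparison between $x_v$ and $x_{u_{p-q+1}}$ inside the single hypergraph $G_{u,v}(p+1,q-1)$. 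The same goes for the ``Li--Feng-type'' branch comparison $B(q-1,1)$ versus $B(q,0)$ combined with ``closer vertex, heavier branch $\Rightarrow$ larger entry'': this is exactly the kind of local eigenvector-ordering statement whose proof is of the same order of difficulty as Theorem~\ref{thm5} itself, so the proposal is circular in difficulty rather than a reduction. There is also a structural gap: the theorem assumes only that $G$ is a connected hypergraph, so ``the branch beyond $v$'' need not be a rooted branch at all — $G$ minus the internal path may reconnect $v$ to $u$ — and then your whole branch-decomposition framing (and the comparison of rooted-branch spectral radii) does not apply. Finally, the claim that entries decrease monotonically along the internal path toward $v$ can fail precisely when $v$'s side is heavy, so the two ingredients you want to ``combine'' are not even simultaneously available.

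The paper avoids any eigenvector comparison across graphs or any branch calculus. After obtaining $x_{u_{p-q+1}}>x_v$ from Lemma~\ref{lem 1}, it moves $E_G(v)\setminus\{m_1\}$ from $v$ to $u_{p-q+1}$ (Theorem~\ref{thm 4}) to get an intermediate hypergraph $H$ with $\rho_\alpha(H)>\rho_\alpha(G_{u,v}(p+1,q-1))$; it then works with the principal eigenvector $\boldsymbol{x}'$ of $H$ and propagates the inequalities $x'_{w_i}<x'_{u_{p-q+1-i}}$ step by step along the internal path using the $2$-switch Corollary~\ref{cor3.2} (each failure of the inequality would produce a copy of $G_{u,v}(p,q)$ with larger $\alpha$-spectral radius than $H$, contradicting the assumption); and it finishes by moving $E_G(u)\setminus\{m_s\}$ from $u$ to $u_{p-q+1-s}$, producing $\widetilde G\cong G_{u,v}(p,q)$ with $\rho_\alpha(\widetilde G)>\rho_\alpha(H)>\rho_\alpha(G_{u,v}(p+1,q-1))$ — the desired contradiction. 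Your closing one-sentence alternative (``2-switch the pendant tail at $u$ against $\mathbb P$ and induct on $(p-q+1)-s$'') is in the spirit of this argument, but as written it omits the construction of $H$, the eigenvector it propagates, and the final edge-moving step, so it does not yet constitute a proof. To repair your write-up, either carry out that chain explicitly as the paper does, or supply genuine proofs of the entrywise lemmas you postulate (and restrict or rework the branch decomposition so it is meaningful for non-acyclic $G$).
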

\begin{proof}
On the contradiction, we assume there exists some positive integers $p,q$ with $p-q+1\ge s\geq 0$ such that
\begin{equation}\label{eq:assumption}
	\rho_{\alpha}(G_{u,v}(p,q))\le \rho_{\alpha}(G_{u,v}(p+1,q-1)).
\end{equation}

Let $\mathbb{Q}=w_0m_1w_1\dots w_{s-1}m_sw_s$  be the internal path of $G_{u,v}(p+1,q-1)$ with $w_0=v$ and $w_s=u$. Let $H$ be the $k$-uniform hypergraph obtained from $G_{u,v}(p+1,q-1)$ by moving $E_G(v)\backslash \{m_1\}$ from $v$ to $u_{p-q+1}$ when $s\ge 1$ or moving $E_G(v)$ from $v$ to $u_{p-q+1}$ when $s=0$.
Let $\boldsymbol{x}$, $\boldsymbol{x}'$  be the principal eigenvectors of $\mathcal A_{\alpha}(G_{u,v}(p+1,q-1))$, $\mathcal A_{\alpha}(H)$, respectively.

Under the assumption \eqref{eq:assumption}, we have $x_{u_{p-i}}>x_{v_{q-i-1}}$ for $i=0,\dots,q-1$ from Lemma \ref{lem 1}. Thus $x_{u_{p-q+1}}>x_v$. By Theorem \ref{thm 4}, we have
\begin{equation}\label{eq:8}
	\rho_{\alpha}(H)>\rho_{\alpha}(G_{u,v}(p+1,q-1)).
\end{equation}
Then $p-q+1>s>1$ holds. Otherwise, we have  $s=p-q+1$ or $s=0$, then both cases $H\cong G_{u,v}(p,q)$.  So $$\rho_{\alpha}(G_{u,v}(p,q))>\rho_{\alpha}(G_{u,v}(p+1,q-1)),$$ which contradicts to the assumption \eqref{eq:assumption}.

Furthermore, we have the following claim.

\noindent{\bfseries{Claim}}:  $x'_{w_i}<x'_{u_{p-q+1-i}}$ for $0\le i\le s$.

\noindent{Proof}. Assume for contradiction that there exist some $i \geq 0$ such that $x'_{w_i}\geq x'_{u_{p-q+1-i}}$.
 Let $j$ be the smallest number of them. Then we have
$j>0$. Otherwise,  since $w_0=v$, $x'_{u_{p-q+1}}\leq x'_v$,  $\rho_{\alpha}(H)<\rho_{\alpha}(G_{u,v}(p+1,q-1))$ holds from Theorem \ref{thm 4}. This contradicts the formula \eqref{eq:8}.

Herefore, $j>0$  and $x'_{w_j}\geq x'_{u_{p-q+1-j}},x'_{w_{j-1}}<x'_{u_{p-q+2-j}}$.

According to Corollary \ref{cor3.2}, there exist $e',m'$ such that\\[1mm] \hspace*{1cm} $\{w_j,u_{p-q+2-j} \} \subset e'$,  $\{w_{j-1},u_{p-q+1-j}\}\subset m'$ and $\rho_{\alpha}(H) < \rho_{\alpha}(G')$, \\[1mm]
where $G'=H-\{m_j,e_{p-q+1-j}\}+\{e',m'\} \cong G_{u,v}(p,q)$. By Formula \eqref{eq:8}, we have $\rho_{\alpha}(G_{u,v}(p+1,q-1))<\rho_{\alpha}(H) < G_{u,v}(p,q)$.

This contradicts the assumption \eqref{eq:assumption}.  \hfill $\blacksquare$\vspace{3mm}

 Let $\widetilde{G}$ be the $k$-uniform hypergraph obtained from $H$ by moving  $E_G(u)\backslash m_s$ from $u$ to $u_{p-q+1-s}$. Then $\widetilde{G}\cong G_{u,v}(p,q)$.
From Claim, $x'_u=x'_{w_s}<x'_{u_{p-q+1-s}}$. So $$\rho_{\alpha}(G_{u,v}(p,q))=\rho_{\alpha}(\widetilde{G})>\rho_{\alpha}(H)>\rho_{\alpha}(G_{u,v}(p+1,q-1))$$  follows from  Theorem \ref{thm 4} and Formula \eqref{eq:8}, also a contradiction.

Therefore, $\rho_{\alpha}(G_{u,v}(p,q))>\rho_{\alpha}(G_{u,v}(p+1,q-1))$.
\end{proof}

Let $G$ be a connected $k$-uniform hypergraph and $u\in V(G)$, let $G_u(p,q)$ be the graph obtained by attaching the paths $\mathbb{P}_p$ and $\mathbb{P}_q$ to $u$.

From Theorem \ref{thm5}, we have the following corollary which  reported in \cite{guo2018alpha}

\begin{cor}[\cite{guo2018alpha}]
For $k\ge 2$, let $G$ be a connected $k$-uniform hypergraph with $|E(G)|\ge 1$ and $u\in V(G)$. For $p\ge q \ge 1$ and $0\le \alpha <1$, we have $\rho_{\alpha}(G_u(p,q))>\rho_{\alpha}(G_u(p+1,q-1))$.    \label{thm6}
\end{cor}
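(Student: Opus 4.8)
The plan is to derive Corollary~\ref{thm6} as a direct specialization of Theorem~\ref{thm5}. The key observation is that $G_u(p,q)$ is nothing but $G_{u,v}(p,q)$ in the degenerate case where the two attachment vertices coincide, i.e. $u=v$. Under this identification the "internal path" $\mathbb{P}$ connecting $u$ to $v$ has length $s=0$, since $u$ and $v$ are the same vertex and the trivial path suffices. So first I would record that $G_u(p,q)=G_{u,v}(p,q)$ with $u=v$, and verify that the hypotheses of Theorem~\ref{thm5} are met: we need $u$ to be a non-pendant vertex and an internal path of length $s$ to exist in $G_{u,v}(p,q)$ for every $p\ge q\ge 1$.

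The second step handles the non-pendantness requirement. After attaching the pendant paths $\mathbb{P}_p$ and $\mathbb{P}_q$ to the single vertex $u$, the vertex $u$ lies in the first edge of each of these two paths, hence $d_{G_u(p,q)}(u)\ge 2+|E_G(u)|\ge 2$ (with equality when $|E(G)|=1$ and $u$ happens to lie in a single edge of $G$ that is "used up"; in any case the degree is at least $2$). Thus $u$ is non-pendant. Strictly speaking Theorem~\ref{thm5} is stated for two \emph{distinct} non-pendant vertices $u,v$ joined by an internal path; I would remark that the proof of Theorem~\ref{thm5} goes through verbatim in the coincident case with $s=0$ --- indeed, tracing that proof, when $s=0$ the auxiliary hypergraph $H$ obtained by moving $E_G(v)$ from $v$ to $u_{p-q+1}$ is already isomorphic to $G_{u,v}(p,q)$ (this is exactly the "$s=0$" branch explicitly treated there), and Lemma~\ref{lem 1} together with Theorem~\ref{thm 4} immediately yields the strict inequality. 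So the $s=0$ instance of Theorem~\ref{thm5} reads precisely $\rho_{\alpha}(G_{u,v}(p,q))>\rho_{\alpha}(G_{u,v}(p+1,q-1))$ for all $p\ge q\ge 1$, which upon setting $v=u$ is the claim.

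Finally I would assemble these observations: applying Theorem~\ref{thm5} with $v=u$ and $s=0$ (legitimate since $p-q+1\ge 1\ge 0=s$ whenever $p\ge q\ge 1$) gives $\rho_{\alpha}(G_u(p,q))=\rho_{\alpha}(G_{u,u}(p,q))>\rho_{\alpha}(G_{u,u}(p+1,q-1))=\rho_{\alpha}(G_u(p+1,q-1))$, completing the proof.

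The main obstacle, such as it is, is purely expository rather than mathematical: Theorem~\ref{thm5} as literally stated seems to presuppose $u\ne v$ (it speaks of "two non-pendant vertices" and of an internal path between them), so one must either check that its proof is insensitive to the coincidence $u=v$ when $s=0$, or alternatively give a short self-contained argument for the $s=0$ case directly from Lemma~\ref{lem 1} and Theorem~\ref{thm 4}. I expect the latter route to be cleanest: observe that if $\rho_{\alpha}(G_u(p,q))\le\rho_{\alpha}(G_u(p+1,q-1))$, then Lemma~\ref{lem 1} (with the second pendant path relabelled, $v_0=u$) forces $x_{u_{p-q+1}}>x_u$ on the $\alpha$-Perron vector of $G_u(p+1,q-1)$; moving the last edge $e_{p+1}$ of the longer path from $u_p$ to $u_{q-1}$ produces a hypergraph isomorphic to $G_u(p,q)$ with strictly larger $\alpha$-spectral radius by Theorem~\ref{thm 4}, contradicting the assumption.
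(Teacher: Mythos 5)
Your proposal matches the paper's own derivation: the paper offers no separate proof, simply asserting that the corollary follows from Theorem~\ref{thm5} applied with $v=u$ and $s=0$ (which is admissible since $p-q+1\ge 1\ge 0$ when $p\ge q\ge 1$). Your additional care about the degenerate coincidence $u=v$, and the fallback argument via Lemma~\ref{lem 1} and Theorem~\ref{thm 4}, is a reasonable elaboration of the same route rather than a different one.
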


\begin{thm}\label{thm10}
Let $G$ be a connected uniform hypergraph  and $u, v$ be two pendant vertices in a pendant edge $e$ of $G$. If $p\ge q\ge 1$, then $$\rho_{\alpha}(G_{u,v}(p,q))>\rho_{\alpha}(G_{u,v}(p+1,q-1)). $$
\end{thm}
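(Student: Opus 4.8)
The plan is to argue by contradiction in the spirit of Theorem \ref{thm5}, but the situation is easier here because the pendant edge $e$ containing $u$ and $v$ plays the role of an internal path of length $s=0$ between the two attachment points. Assume for contradiction that $\rho_{\alpha}(G_{u,v}(p,q))\le \rho_{\alpha}(G_{u,v}(p+1,q-1))$, and let $\boldsymbol{x}$ be the principal eigenvector of $\mathcal A_{\alpha}(G_{u,v}(p+1,q-1))$. Write the two pendant paths as $u e_1 u_1\cdots u_p e_{p+1}u_{p+1}$ at $u$ and $v f_1 v_1\cdots v_{q-2}f_{q-1}v_{q-1}$ at $v$, with the convention $u_0=u$, $v_0=v$.

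First I would invoke Lemma \ref{lem 1}: under the assumption $\rho_{\alpha}(G_{u,v}(p,q))\le \rho_{\alpha}(G_{u,v}(p+1,q-1))$, we get $x_{u_{p-i}}>x_{v_{q-i-1}}$ for $i=0,\dots,q-1$; in particular $x_{u_{p-q+1}}>x_v$. Here is where the pendant-edge hypothesis enters: since $e$ is a pendant edge of $G$ containing the two pendant vertices $u,v$, and since in $G_{u,v}(p+1,q-1)$ the only other edges meeting $e$ are $e_1$ (at $u$) and $f_1$ (at $v$), I can form the hypergraph $H$ obtained from $G_{u,v}(p+1,q-1)$ by moving the single edge $e_1$ from $u$ to $u_{p-q+1}$. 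Because $u,v$ lie in the same edge $e$ and no edge other than $e$, $e_1$, $f_1$ touches $\{u,v\}$, one checks directly that $H\cong G_{u,v}(p,q)$: relocating the path $\mathbb P_{p+1}$'s root from $u$ to $u_{p-q+1}$ turns the $(p+1,q-1)$ configuration into the $(p,q)$ configuration (the $q-1$ path at $v$ together with $e$ and the relocated path of length $p+1$ reassembles as paths of lengths $p$ and $q$ through the common edge $e$). Since the relocation moves the edge $e_1\ni u_1$ (with $u_1\notin$ the new edge) and $x_{u_{p-q+1}}>x_v\ge\dots$, more to the point $x_{u_{p-q+1}}> x_{u_1}$ is not what we need — rather I use $x_u$ versus $x_{u_{p-q+1}}$; carefully, moving $e_1$ from $u$ means deleting $e_1$ and adding $(e_1\setminus\{u\})\cup\{u_{p-q+1}\}$, and the relevant comparison supplied by Lemma \ref{lem 1} is $x_{u_{p-q+1}}>x_{v}$, which combined with the structure gives, via Theorem \ref{thm 4}, that $\rho_\alpha(H)>\rho_\alpha(G_{u,v}(p+1,q-1))$.

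Then $\rho_{\alpha}(G_{u,v}(p,q))=\rho_{\alpha}(H)>\rho_{\alpha}(G_{u,v}(p+1,q-1))$, contradicting the assumption, which finishes the proof. The main obstacle I anticipate is the bookkeeping in the isomorphism $H\cong G_{u,v}(p,q)$ and, relatedly, making sure the single edge move is legitimately an instance of Theorem \ref{thm 4} — i.e., identifying the right vertices $u_i,v_i$ and edges $e_i$ so that the hypotheses $u\notin e_i$, $v_i\in e_i$, $e_i'\notin E$ and $x_u\ge x_{v_i}$ (here in the direction $x_{u_{p-q+1}}\ge x_{\text{pendant endpoint of }e}$) are all met. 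One should also double-check the degenerate cases $q=1$ (the $v$-path is empty, $e$ itself is the last edge on the $q$-side) and $p=q$, where $u_{p-q+1}=u_1$, to confirm the move and the isomorphism still go through; in the case $p=q$ the vertex $u_{p-q+1}=u_1$ still has $x_{u_1}>x_v$ by Lemma \ref{lem 1} with $i=q-1$, so Theorem \ref{thm 4} still applies.
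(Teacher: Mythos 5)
There is a genuine gap at the central step. After invoking Lemma \ref{lem 1} to get $x_{u_{p-q+1}}>x_v$, you propose to form $H$ by moving the single edge $e_1$ from $u$ to $u_{p-q+1}$, i.e.\ replacing $e_1$ by $e_1'=(e_1\setminus\{u\})\cup\{u_{p-q+1}\}$, and you claim $H\cong G_{u,v}(p,q)$. This is false: $u_{p-q+1}$ is a vertex of the very path whose first edge you are relocating, so the move simultaneously (i) disconnects the hypergraph --- the only link between $\{e_1,\dots,e_{p+1}\}$ and the rest of $G_{u,v}(p+1,q-1)$ was the vertex $u\in e\cap e_1$, and it is severed --- and (ii) creates a cycle, since $e_1'$ and the subpath $e_2,\dots,e_{p-q+1}$ both join $u_1$ to $u_{p-q+1}$. (For $p=q$ the move is not even admissible in the sense of Theorem \ref{thm 4}, because then $u_{p-q+1}=u_1\in e_1$.) The correct analogue of the $s=0$ case of Theorem \ref{thm5} is to relocate the attachment of the rest of $G$, not a path edge: writing $e=\{u,v,w_1,\dots,w_{k-2}\}$ with $w_1$ the vertex through which $e$ is attached to the rest of $G$, and $e_{p-q+1}=\{u_{p-q},u_{p-q+1},w_1',\dots,w_{k-2}'\}$, one moves all edges at $w_1$ other than $e$ over to $w_1'$; this genuinely yields a copy of $G_{u,v}(p,q)$, with $e_{p-q+1}$ becoming the new pendant edge carrying paths of lengths $p$ and $q$.

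That move, however, needs $x_{w_1'}\ge x_{w_1}$, which Lemma \ref{lem 1} does not supply, and this is the second missing ingredient. The paper's argument is: if $x_{w_1}\le x_{w_1'}$, the relocation above plus Theorem \ref{thm 4} already contradicts the assumption, so one may assume $x_{w_1}>x_{w_1'}$; then the two inequalities $x_{w_1}>x_{w_1'}$ and $x_v<x_{u_{p-q+1}}$ feed into Corollary \ref{cor3.2} applied to the pair of edges $e$ and $e_{p-q+1}$, producing a 2-switch whose result is again isomorphic to $G_{u,v}(p,q)$ and has strictly larger $\alpha$-spectral radius. Your proposal never brings $w_1$ and $w_1'$ into play, which is exactly where the pendant-edge hypothesis is used, so as written the argument cannot be completed along the lines you sketch.
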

\begin{proof}
On the contradiction, we assume that
\begin{equation}\label{eq:assumption2}
	\rho_{\alpha}(G_{u,v}(p,q))\le \rho_{\alpha}(G_{u,v}(p+1,q-1)).
\end{equation}

As mentioned in Lemma \ref{lem 1}, let $u e_1 u_1\dots u_p e_{p+1} u_{p+1}$ and $v f_1 v_1 \dots v_{q-2} f_{q-1} v_{q-1}$ be the pendant paths in $G_{u,v}(p+1,q-1)$ at $u$ and $v$, respectively. Then by Lemma \ref{lem 1}, we have $x_{u_{p-i}}>x_{v_{q-i-1}}$ for $i=0,\dots,q-1$. Thus $x_{u_{p-q+1}}>x_v$.

Suppose $$e=\{u,v,w_1,w_2,\dots,w_{k-2}\}$$   and $$e_{p-q+1}=\{u_{p-q},u_{p-q+1},w'_1,w'_2,\dots,w'_{k-2}\}. $$

We have $x_{w_1}> x_{w'_1}$. Otherwise, take $G'$ be hypergraph obtained from $G_{u,v}(p+1,q-1)$ by moving all the edges incident with $w_1$ except $e$ from $w_1$ to $w'_1$.
By Theorem \ref{thm 4} and the fact that $G'\cong G_{u,v}(p,q)$, we have\\ $\rho_{\alpha}(G_{u,v}(p,q))> \rho_{\alpha}(G_{u,v}(p+1,q-1))$, a contradiction.

According to Corollary \ref{cor3.2}, there exists $e',f'$ such that\\ $$\{w_1,u_{p-q+1}\}\subset e', \{w'_1, v\}\subset f'$$ and $$\rho_{\alpha}(G_{u,v}(p,q))=\rho_{\alpha}(H')>\rho_{\alpha}(G_{u,v}(p+1,q-1)),$$ where $H'=G-\{e_{p-q+1},e\}+\{e',f'\}$. This contradicts the assumption \eqref{eq:assumption2}.
\end{proof}

\section{Non-caterpillar supertrees  with large $\alpha$-spectral radius}

Let $K_{1,m}$ be the ordinary star with $m$ edges. Let $K_{1,m}^k$ be the $k$th power of $K_{1,m}$. Let the double star $S(a,b)$ be the ordinary tree with $a+b+2$ vertices obtained from an edge $e$ by attaching $a$ pendant edges to one end vertex of $e$, and attaching $b$ pendant edges to the other end vertex of $e$. The $k$th power of $S(a,b)$ is denoted by $S^k(a,b)$.

\begin{figure}[H]%
\centering
\includegraphics[page=5,scale=1.2]{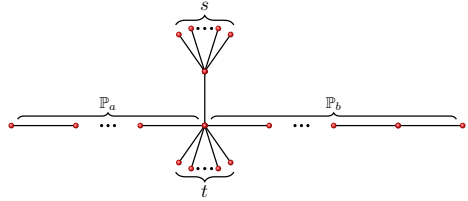}
\caption{The tree $T(s,t;a,b)$}%
       \label{fig:Tstab}%
\end{figure}

Suppose $a,b$ are positive integers with $a+b=d$.  As shown in Fig \ref{fig:Tstab},   $T(s,t;a,b)$  is the tree obtained from $S(s,t)$ by attaching  two  paths of  length $a$ and $b$ to the central vertex of degree  $t+1$  of $S(s,t)$. And $T^k(s,t;a,b)$ denote the $k$th power of  $T(s,t;a,b)$.

Let $$H_1(m,d)=T^k(1,m-d-2;\lfloor{\frac{d}{2}}\rfloor,d-\lfloor{\frac{d}{2}}\rfloor)$$ and $$H_2(m,d)=T^k(m-d-1,0;\lfloor{\frac{d}{2}}\rfloor,d-\lfloor{\frac{d}{2}}\rfloor).$$

\begin{figure}[H]%
\centering
\subfloat[][$H_1(m,d)$]{\includegraphics[page=3,width=.47\textwidth]{figure}}
\subfloat[][$H_2(m,d)$]{\vspace*{-5mm}
\raisebox{4mm}[0pt][0pt]{\includegraphics[page=4,width=.47\textwidth]{figure}}}
\caption{The hypertrees $H_1(m,d)$ and $H_2(m,d)$  (where $i=\lceil  \frac{d}{2} \rceil+1$)}%
       \label{fig:extremaltrees}%
\end{figure}

In  \cite{guo2018spectral},
Guo and Zhou investigated the adjacency spectral radius of uniform hypertrees and showed that $H_1(m,d)$ be the unique non-hyper-caterpillar with maximum spectral radius among $NC(m,d)$. Next we determine the supertrees with the first two largest $\alpha$-spectral radius among $NC(m,d)$.

\begin{lem}
For any $G\in NC(m,d)$, there exists some $G'\in C(m,d)$ with $\rho_{\alpha}(G')>\rho_{\alpha}(G)$.
\end{lem}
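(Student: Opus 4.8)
The plan is to fix a diametral loose path $P=v_0e_1v_1\cdots e_dv_d$ of $G$ and then repeatedly ``flatten'' the pieces of $G$ that hang off $P$ using Corollary \ref{cor1}, each flattening keeping the order $m$ and the diameter $d$ fixed while strictly increasing $\rho_\alpha$, until a caterpillar is reached. Note first that, since $v_0$ and $v_d$ are the ends of a longest path, every vertex of $e_1$ other than $v_1$ and every vertex of $e_d$ other than $v_{d-1}$ has degree $1$; in particular $e_1$ and $e_d$ are pendant edges.

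The first step is a combinatorial reduction. If $G$ is not a caterpillar, then the core $T$ obtained from $G$ by deleting all pendant edges is a connected sub-hypergraph which is not a loose path; since every connected sub-collection of $\{e_1,\dots,e_d\}$ spans a sub-loose-path of $P$, the core $T$ must contain an edge $g\notin\{e_1,\dots,e_d\}$. As $G$ is a supertree, $g$ lies in a uniquely determined \emph{branch} $B$: the component of $g$ in the hypergraph obtained from $G$ by deleting $e_1,\dots,e_d$, which meets $V(P)$ in a single vertex $r$, where either $r=v_j$ with $1\le j\le d-1$, or $r$ is a vertex of some $e_i$ with $2\le i\le d-1$ other than $v_{i-1},v_i$. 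Because $P$ is diametral, one checks that the height of $B$ (the largest distance from $r$ to a vertex of $B$) is at most $\min\{j,d-j\}$ in the first case and at most $\min\{i-1,d-i\}$ in the second; in particular $B$ is not a single pendant edge, since $g\in E(T)$.

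The core of the argument is the flattening of such a $B$ onto its root $r$. While $B$ still has a vertex $b\neq r$ with $\deg_G(b)\ge 2$, pick one at distance $1$ from $r$ (such a $b$ exists unless $B$ is already a star centred at $r$), let $g_b$ be the edge containing $r$ and $b$, and apply Corollary \ref{cor1} to the pair $u_1=r$, $u_2=b$ lying in $g_b$: this moves every edge of $b$ other than $g_b$ from $b$ onto $r$, makes $b$ pendant, and strictly increases $\rho_\alpha$. Each such step strictly decreases the nonnegative integer $\sum d_G(r,b)$, the sum ranging over vertices $b\in V(B)\setminus\{r\}$ with $\deg_G(b)\ge 2$; hence after finitely many steps $B$ becomes a bundle of pendant edges at $r$. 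Throughout, $P$ is left untouched, so $m$ is unchanged and the diameter stays at least $d$; and since flattening only decreases branch heights, while any hypergraph all of whose branches obey the height bounds above has longest path at most $d$ (such a path runs along $P$ with a bounded excursion into at most one branch at either end), the diameter stays exactly $d$. After the flattening the edges of $B$ are all pendant, so $T$ has strictly fewer edges off $P$.

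Finally, I would iterate: as long as $G$ is not a caterpillar, pick a branch containing a core edge and flatten it. The number of core edges not on $P$ is a nonnegative integer that strictly decreases at each iteration, and it is $0$ exactly when $G$ has become a caterpillar of diameter $d$ (its core then being a sub-loose-path of $P$). Since each iteration strictly increases $\rho_\alpha$ and leaves $m$ and $d$ fixed, the terminal hypergraph $G'$ lies in $C(m,d)$ and satisfies $\rho_\alpha(G')>\rho_\alpha(G)$, which proves the lemma. The hard part will be the diameter bookkeeping --- checking that a flattening never creates a path of length exceeding $d$ --- which rests on the two facts that a branch hanging off a diametral path has height bounded by its distances to the two ends of $P$, and that flattening can only shrink such heights; a minor additional point is to verify, at each application, the hypotheses of Corollary \ref{cor1} (in particular $H\ncong H'$, which is clear since the operation strictly lowers $\deg_G(u_2)$).
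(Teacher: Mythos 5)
Your proof is correct and follows essentially the same route as the paper's: both fix a diametral path and repeatedly apply Corollary \ref{cor1} to move the edges at a branching vertex off the path onto a vertex one step closer to (or on) the path, strictly increasing $\rho_\alpha$ at each step until a caterpillar of the same size and diameter is reached. The only differences are organizational --- you collapse each branch onto its root on the path, while the paper pushes the branch point nearest the deepest vertex one step inward --- and you make explicit the diameter-preservation and termination bookkeeping that the paper leaves implicit.
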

\begin{proof}
Let $\mathbb{P}=v_1 e_1 v_2 \dots v_d e_d v_{d+1}$ be a diametral path of $G$.

Let $u \in V(G)$  be the farthest vertex  from $\mathbb{P}$. Clearly, this distance is at least $2$ (otherwise, $G\in C(m,d)$). Let
$r\in V(\mathbb{P})$ and  $\mathbb{Q}=uf_1vf_2w\dots r$ be the shortest path from $u$ to $\mathbb{P}$. Take $G_1$ to be the $k$-uniform supertree obtained from $G$ by moving all edges containing $v$ except $f_2$ from $v$ to $w$. Obviously, $G_1\ncong G$. Then by Corollary \ref{cor1}, we have $\rho_{\alpha}(G)<\rho_{\alpha}(G_1)$. Repeating this procedure, we arrive at a hyper-caterpillar $G'$ with $\rho_{\alpha}(G)<\rho_{\alpha}(G')$. This completes the proof.
\end{proof}

\begin{definition}
Let $G$ be a $k$-uniform linear hypergraph with $k\ge 3$. Let $e=\{v_1,\dots,v_k\}$ be an edge of $G$ with $d_G(v_i)\ge 2$ for $i=1,\dots,r$, and $d_G(v_i)=1$ for $i=r+1,\dots,k$, where $3\le r\le k$. Let $G'$ be the hypergraph obtained from $G$ by moving all edges containing $v_3,\dots,v_r$ except $e$ from $v_3,\dots,v_r$ to $v_1$. We say $G'$ is obtained from $G$ by Operation I.
\end{definition}
\begin{thm}
(see \cite{guo2018alpha}) If $G'$ is obtained from $G$ by Operation I, then we have $\rho_{\alpha}(G')>\rho_{\alpha}(G)$.   \label{thm3}
\end{thm}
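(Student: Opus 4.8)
The plan is to prove $\rho_{\alpha}(G') > \rho_{\alpha}(G)$ by decomposing Operation I into a sequence of edge-moving steps, each of which is an instance of Theorem \ref{thm 4} (the edge-grafting theorem). Let $\boldsymbol{x}$ be the $\alpha$-Perron vector of $G$, normalized so that $\sum_i x_i^k = 1$. Since $G$ is connected and $\mathcal{A}_\alpha(G)$ is weakly irreducible, $\boldsymbol{x}$ is strictly positive. The vertices $v_1, \dots, v_r$ all lie in the common edge $e$, and $v_{r+1}, \dots, v_k$ are pendant in $e$; Operation I takes all edges incident to $v_3, \dots, v_r$ other than $e$ and re-attaches them at $v_1$.

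First I would reduce to the case $r = 3$ by iteration: if I can show that moving the edges at $v_3$ (other than $e$) to $v_1$ strictly increases $\rho_\alpha$ whenever the Perron weight at $v_1$ is at least that at $v_3$, then I can peel off $v_3, v_4, \dots, v_r$ one at a time, and at each stage the relevant comparison of Perron weights will need to be re-established for the \emph{new} graph's Perron vector. So the core step is: given an edge $e = \{v_1, \dots, v_k\}$ with $d(v_1), d(v_2), d(v_3) \ge 2$ and the remaining vertices pendant, move $E_G(v_3) \setminus \{e\}$ from $v_3$ to $v_1$. To apply Theorem \ref{thm 4} with $u = v_1$ and the $v_i$'s there being (copies of) $v_3$, I need $x_{v_1} \ge x_{v_3}$ — and then Theorem \ref{thm 4} gives the strict inequality, since the hypothesis $e_i' \notin E(G)$ holds because $G$ is linear (any edge at $v_3$ other than $e$ meets $e$ only in $\{v_3\}$, so relocating $v_3$ to $v_1$ cannot reproduce an existing edge).

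Thus the real content is the inequality $x_{v_1} \ge x_{v_3}$ (and its analogues at each iteration step). I expect this to be the main obstacle. The idea is to exploit the symmetry of the roles of $v_1$ and $v_3$ in $e$: run Operation I's core step \emph{both ways} — once moving $E_G(v_3)\setminus\{e\}$ onto $v_1$, call the result $G'$, and once moving $E_G(v_1)\setminus\{e\}$ onto $v_3$, call the result $G''$. One checks $G' \cong G''$ (both are obtained by collapsing all the off-$e$ branches at $\{v_1,v_3\}$ onto a single endpoint of $e$). Since $\boldsymbol{x} > 0$ we have either $x_{v_1}\ge x_{v_3}$ or $x_{v_3}\ge x_{v_1}$; in the first case apply Theorem \ref{thm 4} to get $\rho_\alpha(G') > \rho_\alpha(G)$, in the second apply it to get $\rho_\alpha(G'') > \rho_\alpha(G)$, and since $G'\cong G''$ we conclude $\rho_\alpha(G') > \rho_\alpha(G)$ regardless. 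This is exactly the device used in the proof of Corollary \ref{cor1}, so for $r=3$ the result is essentially Corollary \ref{cor1} applied with $u_1 = v_1$, $u_2 = v_3$ (noting $|E_G(v_i)\setminus E_G(\{v_1,v_3\})| \ge 1$ holds because any off-$e$ edge at $v_i$ is disjoint from the other, by linearity), together with the observation $G \ncong G'$.

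For general $r$, I would iterate: set $G_0 = G$, and for $j = 3, \dots, r$ let $G_{j-2}$ be obtained from $G_{j-3}$ by moving the edges at $v_j$ (other than $e$) onto $v_1$; each step is an application of Corollary \ref{cor1} to the pair $\{v_1, v_j\}$ inside the edge $e$ of $G_{j-3}$ — here I should check that after the earlier moves $v_j$ still has some off-$e$ incident edge (it does, since only $v_3, \dots, v_{j-1}$ have been emptied, not $v_j$) and that $v_1$ still lies in $e$ with $v_1 \ne v_j$ — giving $\rho_\alpha(G_{j-3}) < \rho_\alpha(G_{j-2})$. Chaining these strict inequalities yields $\rho_\alpha(G) < \rho_\alpha(G_{r-2}) = \rho_\alpha(G')$. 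The only subtlety worth spelling out is that $G \ncong G'$, i.e. at least one of the moved vertices genuinely carried an off-$e$ edge — which is guaranteed since $r \ge 3$ means $v_3$ is non-pendant with an edge other than $e$ — so the first step is already strict, and that suffices.
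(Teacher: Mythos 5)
The paper itself offers no proof of Theorem~\ref{thm3} --- it is imported from \cite{guo2018alpha} --- so there is nothing in-paper to match your argument against; judged on its own, your derivation from Theorem~\ref{thm 4} via Corollary~\ref{cor1} is correct and self-contained. The peeling strategy (relocate the off-$e$ edges of $v_3,\dots,v_r$ onto $v_1$ one vertex at a time, and at each step invoke the symmetric-move device of Corollary~\ref{cor1} so that you never need to know whether the Perron entry at $v_1$ or at $v_j$ is larger) does work, and your use of linearity to see that the edges at $v_j$ are untouched before step $j$ is exactly the right point. Two spots deserve tightening. First, your justification that the relocated edges are not already present (\emph{because $G$ is linear}) is literally valid only at the first step: once edges have been moved onto $v_1$, the intermediate hypergraphs need not be linear when $G$ is a general linear hypergraph (a relocated edge and an edge at $v_1$, or two relocated edges, may share $v_1$ and a further vertex). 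The hypothesis of Theorem~\ref{thm 4} still holds, but the argument should be: if $(f\setminus\{v_j\})\cup\{v_1\}$ coincided with any edge of the current graph (original or previously relocated), then two distinct edges of the original $G$ would share at least $k-1\ge 2$ vertices, contradicting linearity of $G$. Second, the hypothesis $H\ncong H'$ of Corollary~\ref{cor1} is needed at \emph{every} application, not just the first, so your closing remark that strictness of the first step suffices is not the right justification --- you chain strict inequalities from all steps. The fix is immediate from what you already checked: at each step both $v_1$ and $v_j$ carry an off-$e$ edge, so the move strictly changes the degree sequence (only $v_1$ and $v_j$ change degree, and $v_1$'s new degree exceeds both old ones), whence the two hypergraphs cannot be isomorphic. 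With these two remarks inserted your proof is complete; a one-shot variant --- move everything onto whichever of $v_1,v_3,\dots,v_r$ carries the largest Perron entry, and note that all choices of target yield isomorphic hypergraphs --- would avoid the per-step bookkeeping altogether.
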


Let $\mathcal{G}$ be a family  of linear hypergraphs  which is invariable under Operation I, namely, for any $G \in \mathcal{G}$, the hypergraph obtained from $G$ by Operation I also in $\mathcal{G}$.
\begin{lem}\label{lem2}
Let $H$ with maximum $\alpha$-spectral radius among $\mathcal{G}$, then $H$ is a power hypergraph.
\end{lem}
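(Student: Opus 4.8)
The plan is to argue by contradiction: suppose $H$ has maximum $\alpha$-spectral radius among $\mathcal{G}$ but is not a power hypergraph. The key structural observation is that a linear hypergraph fails to be a power hypergraph precisely when it contains an edge with at least three vertices of degree $\ge 2$ (equivalently, a branching edge); in a power hypergraph every edge carries at most two non-pendant vertices, since it comes from an ordinary edge with two endpoints plus $k-2$ added degree-one vertices. So if $H$ is not a power hypergraph, there is an edge $e=\{v_1,\dots,v_k\}$ of $H$ with $d_H(v_i)\ge 2$ for $i=1,\dots,r$ and $d_H(v_i)=1$ for $i>r$, where $3\le r\le k$.

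Next I would apply Operation I to this edge $e$: move all edges containing $v_3,\dots,v_r$ other than $e$ over to $v_1$, obtaining a hypergraph $H'$. Since $\mathcal{G}$ is assumed invariant under Operation I, $H'\in\mathcal{G}$. By Theorem \ref{thm3}, $\rho_{\alpha}(H')>\rho_{\alpha}(H)$, which contradicts the maximality of $\rho_{\alpha}(H)$ over $\mathcal{G}$. Hence no such branching edge exists, i.e.\ every edge of $H$ has at most two non-pendant vertices, so $H$ is a power hypergraph.

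The one point that needs care — and which I expect to be the main (though minor) obstacle — is justifying cleanly the claim that ``linear $+$ no edge with three non-pendant vertices $\Rightarrow$ power hypergraph.'' One should check that such an $H$ is exactly the $k$-th power of the underlying simple graph $G$ obtained by deleting, from each edge, the degree-one vertices outside a fixed pair, and contracting each hyperedge to the pair of its non-pendant vertices (with a degenerate pendant edge handled by keeping one of its vertices). Linearity guarantees two hyperedges share at most one vertex, so the contraction yields a genuine simple graph, and reversing the contraction re-adds exactly $k-2$ fresh vertices per edge; thus $H=G^k$. Once this structural lemma is in hand, the contradiction argument above closes the proof immediately. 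I would also remark that Operation I requires $r\ge 3$, which is exactly the situation we are in, so the hypothesis of Theorem \ref{thm3} is met.
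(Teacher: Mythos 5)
Your proposal is correct and follows essentially the same route as the paper: locate a branching edge (an edge with at least three vertices of degree $\ge 2$), apply Operation I, and contradict maximality via Theorem \ref{thm3} together with the invariance of $\mathcal{G}$. The only difference is that you spell out the structural fact that a linear hypergraph with no such edge is a power hypergraph, which the paper treats as immediate; your verification of it is sound.
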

\begin{proof}
If $H$ is not a power hypergraph, then there is an edge $f$ with at least three vertices of degree at least $2$, and by Operation I we can get a hypergraph $H'$. Obviously, $H'\in \mathcal{G}$ and by Theorem \ref{thm3}, we have $\rho_{\alpha}(H')>\rho_{\alpha}(H)$, a contradiction. Therefore, $H$ is a power hypergraph.
\end{proof}

\begin{thm}
Let $G\in NC(m,d)$, then we have $$\rho_{\alpha}(G)\le max\{\rho_{\alpha}(H_1(m,d)),\rho_{\alpha}(H_2(m,d))\}.$$ Equality holds if and only if $G\cong H_1(m,d)$ or $G\cong H_2(m,d)$.    \label{thm8}
\end{thm}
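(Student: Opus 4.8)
The plan is to show that among all hypergraphs in $NC(m,d)$, the maximum $\alpha$-spectral radius is attained only by a power hypergraph of a specific shape, and then to narrow the candidates down to exactly $H_1(m,d)$ and $H_2(m,d)$ using the edge-grafting machinery of Section~3. First I would let $G\in NC(m,d)$ be a hypergraph of maximum $\alpha$-spectral radius in this class and let $\boldsymbol{x}$ be its principal eigenvector; it suffices to prove $G\cong H_1(m,d)$ or $G\cong H_2(m,d)$. The class $NC(m,d)$ is closed under Operation~I (moving pendant-incident edges off an edge with three or more vertices of degree $\geq 2$ does not create a cycle, does not change $m$, and cannot destroy a branching vertex off the diametral path, so the result stays a non-caterpillar with the same diameter), so by Lemma~\ref{lem2} the extremal $G$ is a power hypergraph, i.e.\ $G=T^k$ for some ordinary tree $T$ on $m$ edges with diameter $d$ that is not a caterpillar. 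Thus the problem reduces to identifying the extremal tree among non-caterpillar trees with $m$ edges and diameter $d$.

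Next I would fix a diametral loose path $\mathbb{P}=v_0 e_1 v_1 \cdots e_d v_d$ of $G$. Since $G$ is a non-caterpillar power hypergraph, after deleting all pendant edges there is still a branching vertex, so some vertex $v_j\in V(\mathbb P)$ with $2\le j\le d-2$ carries, besides the two path edges, at least one more ``branch'' which itself is not a single pendant edge — in fact, by repeatedly applying Theorem~\ref{thm5} (the internal-path grafting theorem) and Corollary~\ref{thm6} (the pendant two-path grafting corollary) to the branches hanging at vertices of $\mathbb P$, I would push all the extra mass toward a single branching vertex and show: the extremal $G$ has exactly one branching vertex $w$ off the ``spine'', all of whose branches except two are pendant edges, and the two non-trivial branches are themselves loose paths. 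Concretely, any branch that is a loose path of length $\geq 1$ attached at a non-endpoint of the diametral path can, together with other branches at the same or neighbouring vertices, be redistributed by Theorem~\ref{thm5}/Corollary~\ref{thm6} without decreasing $\rho_\alpha$; iterating, the extremal configuration is $T^k(s,t;a,b)$ for suitable parameters, i.e.\ a double-star-like core $S(s,t)$ with two pendant paths of lengths $a,b$ attached at the center, with $a+b=d$ forced by the diameter and $s+t = m-d-? $ forced by the edge count. Among such $T^k(s,t;a,b)$, Corollary~\ref{thm6} gives $a=\lfloor d/2\rfloor$, $b=d-\lfloor d/2\rfloor$ (balancing the two pendant paths maximizes $\rho_\alpha$), and Operation~I together with a further grafting argument shows that the $s$ pendant edges and the $t$ pendant edges should be concentrated so that either $s=1$ (forced by non-caterpillar: we need at least one extra branch beyond a pendant edge to make it a genuine non-caterpillar, hence $t = m-d-2$ and $s=1$, giving $H_1$) or $t=0$ with $s=m-d-1$ (giving $H_2$). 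The remaining point is to compare the two surviving candidates $H_1(m,d)$ and $H_2(m,d)$, which need not be comparable by a single grafting step — hence the $\max$ in the statement.

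The main obstacle, as I see it, is the reduction in the second paragraph: showing that \emph{one} branching vertex suffices and that its two nontrivial branches are loose paths. Each grafting move (Theorem~\ref{thm5}, Corollary~\ref{thm6}, Corollary~\ref{cor1}) strictly increases $\rho_\alpha$, so the real work is a careful finiteness/termination argument — a monovariant such as the number of branching vertices plus the total length of non-path branches — guaranteeing that repeated application terminates at a hypergraph of the form $T^k(s,t;\lfloor d/2\rfloor, d-\lfloor d/2\rfloor)$, and that the only such hypergraphs which remain in $NC(m,d)$ (i.e.\ stay non-caterpillars with diameter exactly $d$) and are \emph{maximal} with respect to all these moves are $H_1(m,d)$ and $H_2(m,d)$. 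One must be careful that a grafting move does not accidentally decrease the diameter below $d$ or turn $G$ into a caterpillar; I would handle this by always grafting \emph{onto} the spine or toward the center, never shortening a pendant path below $\lfloor d/2\rfloor$, and by keeping at least one branching vertex until the final comparison. Once the candidate set is $\{H_1(m,d),H_2(m,d)\}$, the statement follows, with the ``if and only if'' clause coming from the strictness of the grafting inequalities in Theorems~\ref{thm5},~\ref{thm3} and Corollaries~\ref{thm6},~\ref{cor1}.
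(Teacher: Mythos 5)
Your proposal follows essentially the same route as the paper: reduce the extremal $G$ to a power hypertree via Operation~I and Lemma~\ref{lem2}, then use edge-moving and path-grafting along a diametral path to force the shape $T^k(s,t;\lfloor d/2\rfloor,d-\lfloor d/2\rfloor)$, with the non-caterpillar constraint ($s\ge 1$) leaving only the two candidates $H_1(m,d)$ and $H_2(m,d)$. The only real difference is that the paper dissolves your termination/monovariant worry by proving three direct structural claims about the extremal $G$ (every vertex lies within distance $2$ of $\mathbb{P}$; exactly one branching vertex on $\mathbb{P}$; exactly one vertex of degree at least $2$ off $\mathbb{P}$), each by a single edge-moving contradiction via Theorem~\ref{thm 4} and Corollary~\ref{cor1}, so no iterative process is needed once extremality is assumed.
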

\begin{proof}
Let $G$ with maximum $\alpha$-spectral radius among $NC(m,d)$. Let $\boldsymbol{x}$ be the principal eigenvector of $\mathcal A_{\alpha}(G)$. Let $\mathbb{P}=v_1e_1v_2 \dots v_de_dv_{d+1}$ be the diametral path of $G$. From $NC(m,d)\subseteq \mathcal{G}$ and Lemma \ref{lem2}, we have $G$ is a $k$-uniform power hypertree.

\noindent{\bfseries{Claim 1}}. $\forall v\in V(G), d(v,\mathbb{P})\le 2$.

\noindent\itshape{proof}. Assume to the contrary, $\exists u \in V(G),\ d(u,\mathbb{P})=d(u,r)\ge 3,\ r\in V(\mathbb{P})$. Let $\mathbb{Q}=uf_1vf_2w\dots r$ be the shortest path from $u$ to $\mathbb{P}$. Let $G'$ be the $k$-uniform power hypertree obtained from $G$ by moving all edges containing $v$ except $f_2$ from $v$ to $w$. Obviously, $G'\in NC(m,d)$ and $G'\ncong G$. Then by Corollary \ref{cor1}, we have $\rho_{\alpha}(G')>\rho_{\alpha}(G)$, a contradiction. Thus $\forall v\in V(G), d(v,\mathbb{P})\le 2$.\hfill$\blacksquare$\vspace{3mm}

\noindent{\bfseries{Claim 2}}. There only exists a vertex $v_i$ with $d(v_i)\ge 3$, $2\le i\le d$.

\noindent\itshape{proof}. Suppose that there are two vertices  $v_j$ and $v_s$ with $d(v_j),d(v_s)\ge 3$   $(2\le j,s \le d)$ on $\mathbb{P}$. Without loss of generality, assume that $x_{v_j}\ge x_{v_s}$. Let $G'$ be the hypertree obtained from $G$ by moving all edges containing $v_s$ except $e_{s-1}$ and $e_s$ from $v_s$ to $v_j$. Obviously, $G'\in NC(m,d)$. By Theorem \ref{thm 4}, $\rho_{\alpha}(G')>\rho_{\alpha}(G)$, a contradiction. Thus there only exists a vertex $v_i$ with $d(v_i)\ge 3, 2\le i\le d$.\hfill$\blacksquare$\vspace{3mm}

\noindent{\bfseries{Claim 3}}. There exists exactly one vertex $v\in V(G\backslash \mathbb{P})$ with $d(v)\ge 2$.

\noindent\itshape{proof}. From the definition of non-caterpillar supertrees, we have that there must exist vertex $v\in V(G\backslash \mathbb{P})$ with $d(v)\ge 2$. Assume to the contrary, there exists vertices $u,v\in V(G\backslash \mathbb{P})$ with $d(u),d(v)\ge 2$. Without loss of generality, assume that $x_u\ge x_v$. Let $G'$ be the hypertree obtained from $G$ by moving all pendant edges containing $v$ from $v$ to $u$. Obviously, $G'\in NC(m,d)$. By Theorem \ref{thm 4}, $\rho_{\alpha}(G')>\rho_{\alpha}(G)$, a contradiction. Thus there exists exactly one vertex $v\in V(G\backslash \mathbb{P})$ with $d(v)\ge 2$.\hfill$\blacksquare$\vspace{3mm}

Suppose that there are $l$ pendant edges in $G$ and $t$ pendant edges attached on the diametral path $\mathbb{P}$.
From above claims and Theorem \ref{thm 4}, $G$ must be some graph $T^k_{s,t;a,b}$ with $s=l-t-2>0, a \geq 2, b \geq 2$ and $a+b=d$.
Otherwise, we can relocate some pendant edge attached on the diametral path such that the resulting graph $G' \in NC(m,d)$ and $\rho_{\alpha}(G) < \rho_{\alpha}(G')$, a contradiction. For the same reason, $G$ must be  one of $T^k_{s+t,0;a,b}$ and $T^k_{1,s+t-1;a,b}$.

Furthermore, by Corollary \ref{thm6}, we have $G\cong H_1(m,d)$ or $G\cong H_2(m,d)$. Otherwise, by grafting edges on the diametral path, we can
get some $G' \in NC(m,d)$ with $\rho_{\alpha}(G) < \rho_{\alpha}(G')$, a contradiction.  This completes the proof.
\end{proof}

According  to \cite{guo2018alpha}, Guo et al. determined the unique non-hyper-caterpillar with maximum adjacency spectral radius among ${NC}(m)$. So Guo et al. confirmed \eqref{b} for the adjacency spectral and in the following theorem, we determine the supertrees with the first two largest  $\alpha$-spectral radius among ${NC}(m)$. And we confirm \eqref{b} for the $\alpha$-spectral radius.
\begin{thm}
If $G\in NC(m)$ and $0\le \alpha <1$, then $$\rho_{\alpha}(G)\le max\{\rho_{\alpha}(H_1(m,4)),\rho_{\alpha}(H_2(m,4))\}.$$ Equality holds if and only if $G\cong H_1(m,4)$ or $G\cong H_2(m,4)$.
\end{thm}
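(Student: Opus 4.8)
The plan is to reduce the diameter-free problem to the diameter-constrained one already solved in Theorem \ref{thm8}, by showing that among all non-caterpillar supertrees with $m$ edges the extremal ones must have diameter exactly $4$. First I would observe that by Theorem \ref{thm8}, for every fixed $d$ with $4\le d\le m-2$ the maximum of $\rho_\alpha$ over $NC(m,d)$ is attained at $H_1(m,d)$ or $H_2(m,d)$; hence the maximum over all of $NC(m)$ equals $\max\{\rho_\alpha(H_1(m,d)),\rho_\alpha(H_2(m,d)): 4\le d\le m-2\}$. So it suffices to prove that the functions $d\mapsto\rho_\alpha(H_1(m,d))$ and $d\mapsto\rho_\alpha(H_2(m,d))$ are strictly decreasing in $d$ on this range, and moreover that $\rho_\alpha(H_i(m,4))$ dominates $\rho_\alpha(H_j(m,d))$ for the "other" index $j\ne i$ when $d>4$.

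The main tool for the monotonicity will be Theorem \ref{thm5} (the internal-path edge-grafting theorem) together with Corollary \ref{thm6}. Recall $H_1(m,d)=T^k(1,m-d-2;\lfloor d/2\rfloor,\lceil d/2\rceil)$: it consists of a central branching vertex carrying $m-d-2$ pendant edges, one extra pendant edge at an adjacent vertex, and two loose paths of lengths $\lfloor d/2\rfloor$ and $\lceil d/2\rceil$ hanging off the center. Passing from $H_1(m,d+1)$ to $H_1(m,d)$ amounts to removing one edge from the longer of the two central paths and re-attaching it as a pendant edge at the branching vertex — equivalently, one can realize $H_1(m,d)$ and $H_1(m,d+1)$ as $G_{u,v}(p,q)$ and $G_{u,v}(p-1,q+1)$ (or similar) for a common core $G$ in which the relevant vertex has degree $\ge 3$, so the relevant path counts as an internal/pendant path and Theorem \ref{thm5}/Corollary \ref{thm6} applies to give $\rho_\alpha(H_1(m,d))>\rho_\alpha(H_1(m,d+1))$. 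The same argument applies verbatim to $H_2(m,d)$. Care is needed at the boundary cases $d=4$ and $d=m-2$ (where one of the central paths has length $2$, the minimum allowed, and where $s=0$ or $s=1$), and in checking that after the grafting the hypergraph is still a non-caterpillar, i.e. stays in $NC(m,d)$ rather than degenerating into a caterpillar — this is where I would be most careful, since the whole reduction hinges on not leaving the class $NC(m)$.

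For the cross comparison — say $\rho_\alpha(H_1(m,4))$ versus $\rho_\alpha(H_2(m,d))$ for $d\ge 5$ — I would chain inequalities: by the monotonicity just proved, $\rho_\alpha(H_2(m,d))\le\rho_\alpha(H_2(m,4))$, so it remains only to compare $H_1(m,4)$ and $H_2(m,4)$ directly, i.e. the two candidate extremal supertrees of diameter $4$. That final head-to-head comparison cannot be settled by grafting alone (neither is obtained from the other by moving edges toward a larger-eigenvector vertex in a monotone way), so I expect this to be the genuine obstacle. The plan there is to use the Rayleigh-type characterization in Theorem \ref{thm1} together with the eigenequation \eqref{eq:eigncomp}: write down the principal eigenvector of the one with larger (conjecturally) spectral radius, exploit the symmetry of $H_1(m,4)$ and $H_2(m,4)$ (both have the two length-$2$ paths symmetric about the center), and construct an explicit test vector on the other hypergraph — obtained by transplanting the eigenvector entries along the natural correspondence of vertices — whose Rayleigh quotient beats $\rho_\alpha$ of the first. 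Comparing $\sum_{e}(\alpha\sum_{u\in e}x_u^k+(1-\alpha)kx_e)$ term by term under this correspondence reduces to an inequality between the "$s$ pendant edges at one vertex" configuration and the "$1$ plus $s-1$ split over an edge" configuration, which should follow from the convexity/AM-GM comparisons implicit in Operation I (Theorem \ref{thm3}) and Theorem \ref{thm 4}. If a clean test-vector argument proves elusive, the fallback is to note that $H_1(m,4)$ and $H_2(m,4)$ are precisely the images of a common smaller supertree under two applications of the pendant-edge-moving operation of Corollary \ref{cor1}/Theorem \ref{thm 4}, and to track which application increases $\rho_\alpha$ more; in any case, the statement of the theorem only asserts the max of the two, so even if the ordering between $H_1(m,4)$ and $H_2(m,4)$ depends on $m,k,\alpha$, the conclusion as written stands once the diameter-reduction step is complete.
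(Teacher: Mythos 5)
Your reduction is sound and the theorem does follow from it, but you take a genuinely different route from the paper. The paper does not prove monotonicity of $d\mapsto\rho_{\alpha}(H_i(m,d))$ at all: it takes a global maximizer $G$ of $\rho_\alpha$ over $NC(m)$, invokes Theorem \ref{thm8} to conclude $G\cong H_1(m,d)$ or $H_2(m,d)$ for some $4\le d\le m-2$, and then, assuming $d\ge 5$, picks two consecutive internal vertices $v_t,v_{t+1}$ of a diametral path with $x_{v_t}\ge x_{v_{t+1}}$ and moves all edges at $v_{t+1}$ except $e_t$ onto $v_t$; Theorem \ref{thm 4} gives a strict increase while staying in $NC(m)$, a contradiction, so $d=4$. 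Your approach instead establishes the stronger fact that $\rho_\alpha(H_i(m,d))$ is strictly decreasing in $d$ for each $i$, which immediately yields the theorem (and a full ordering by diameter); it also sidesteps the need to verify that the perturbed graph remains a non-caterpillar, since you only ever compare the named hypergraphs $H_i(m,d)$ and $H_i(m,d+1)$, both non-caterpillars by definition — so your stated worry about ``leaving the class'' is actually moot in your setup. Two corrections to your execution: first, the $H_2$ case is not ``verbatim'' the same as $H_1$. For $H_1$ the extra pendant edge created by shortening the long path sits at the same branching vertex as the paths, so Corollary \ref{thm6} (same attachment vertex, $q=1$) applies directly; for $H_2$ the pendant edges sit at the \emph{other} end of the edge $e$, so you must compare $G_{u,v}(p,1)$ with $G_{u,v}(p+1,0)$ for two distinct vertices $u,v$ of $e$ and invoke Theorem \ref{thm5} with the internal path $\mathbb{P}=veu$ of length $s=1$ (checking $\deg(u),\deg(v)\ge 3$ in the relevant range of $d$). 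Second, your entire third paragraph on the head-to-head comparison of $H_1(m,4)$ and $H_2(m,4)$ is unnecessary, as you yourself concede at the end: the theorem only asserts a bound by the maximum of the two, so once both monotonicity chains are in place the inequality and the equality characterization follow from Theorem \ref{thm8}. That comparison is precisely the open issue the authors defer to their concluding conjectures, and attempting it by test vectors would be a substantial separate undertaking, not part of this proof.
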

\begin{proof}
Let $G$ with maximum $\alpha$-spectral radius among $NC(m)$. Let $\boldsymbol{x}$ be the principal eigenvector of $\mathcal A_{\alpha}(G)$. Then by Theorem \ref{thm8}, $G\cong H_1(m,d)$ or $G\cong H_2(m,d)$ for some $d$ with $4\le d\le m-2$. Suppose that $d\ge 5$. Let $\mathbb{P}=v_1e_1v_2\dots v_de_dv_{d+1} $ be a diametral path of $G$. Choose $v_t,v_{t+1}\in \{v_2,v_3,\dots, v_d\}$, without loss of generality, assume that $x_{v_t}\ge x_{v_{t+1}}$. Let $G'$ be the hypertree obtained from $G$ by moving all edges containing $v_{t+1}$ except $e_t$ from $v_{t+1}$ to $v_t$. Obviously, $G'\in NC(m)$. By Theorem \ref{thm 4}, we have $\rho_{\alpha}(G)<\rho_{\alpha}(G')$, a contradiction. It follows that $d=4$, i.e., $G\cong H_1(m,4)$ or $G\cong H_2(m,4)$.
\end{proof}

 We finish this section with the following conjectures:

\begin{con}
For $G\in NC(m,d)$ and $0\le \alpha <1$, then $$\rho_{\alpha}(G)\le \rho_{\alpha}(H_1(m,d)).$$ Equality holds if and only if $G\cong H_1(m,d)$.
\end{con}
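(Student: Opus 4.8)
The plan is to compare $H_1(m,d)$ and $H_2(m,d)$ directly, since Theorem~\ref{thm8} already guarantees that the conjectural maximum must be one of these two hypertrees. Recall that $H_1(m,d)=T^k(1,m-d-2;\lfloor d/2\rfloor,\lceil d/2\rceil)$ has its $m-d-2$ "extra" pendant edges attached at a vertex $u$ which is itself the endpoint of a length-$1$ pendant path hanging off the main branching vertex $z$ (the one joined to the two halves of the diametral path), while $H_2(m,d)=T^k(m-d-1,0;\lfloor d/2\rfloor,\lceil d/2\rceil)$ has all $m-d-1$ extra pendant edges attached directly at $z$. So the claim $\rho_\alpha(H_2)<\rho_\alpha(H_1)$ says, roughly, that it is better to push the bundle of pendant edges \emph{one step away} from the center along a pendant path rather than keep it at the center.

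First I would set up the comparison as a single edge-relocation: starting from $H_2(m,d)$, pick the branching vertex $z$ (degree $\geq 3$, lying on the diametral path) and the pendant vertex $u$ in the pendant edge of length $1$ at $z$, and move the bundle of $m-d-1$ pendant edges from $z$ to $u$; the result is $H_1(m,d)$. To invoke Theorem~\ref{thm 4} (with $r=m-d-1$, all $v_i=z$, and the common new vertex $u$), I need $x_u\geq x_z$ in the $\alpha$-Perron vector $\boldsymbol{x}$ of... — but that is exactly backwards, since in $H_2$ the high-degree vertex $z$ should carry larger eigenvector weight than the pendant vertex $u$. So the honest route is the reverse: start from $H_1(m,d)$ with its Perron vector $\boldsymbol{x}$, and show $x_z \le x_u$ is \emph{false}, i.e. actually $x_u < x_z$ still, so that moving the bundle back from $u$ to $z$ would \emph{decrease} $\rho_\alpha$ — wait, that gives $\rho_\alpha(H_2)<\rho_\alpha(H_1)$ only if the move $u\to z$ strictly decreases, which by Theorem~\ref{thm 4} needs $x_z \ge \max$ at the relevant vertices in $H_1$; but Theorem~\ref{thm 4} concludes $\rho_\alpha$ \emph{increases} when we move \emph{toward} the heavier vertex, so I should instead argue: in $H_1$, the vertex $u$ (carrying the bundle) has $x_u>x_z$, hence moving the bundle from $u$ back to $z$ would move it toward the \emph{lighter} vertex and cannot be analyzed by Theorem~\ref{thm 4} directly — so I would instead compare via Corollary~\ref{cor1} or a direct Rayleigh-quotient estimate.

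Concretely, the cleanest approach: apply the Rayleigh characterization (Theorem~\ref{thm1}) using the Perron vector $\boldsymbol{x}$ of $H_2(m,d)$ as a test vector for $\mathcal A_\alpha(H_1(m,d))$, and estimate $\boldsymbol{x}^T\mathcal A_\alpha(H_1)\boldsymbol{x} - \rho_\alpha(H_2)$ from below. The only edges that differ are the $m-d-1$ relocated pendant edges: in $H_2$ each contributes $\alpha(x_z^k + (\text{pendant terms})) + (1-\alpha)k x_z x_{(\cdot)}$, and in $H_1$ the same with $x_z$ replaced by $x_u$ and with the intermediate structure along the short pendant path adjusted. This reduces the whole inequality to showing that in $\boldsymbol{x}$ (the $H_2$-Perron vector) one has $x_u\ge x_z$, or more precisely that the relevant product of weights along the length-$1$ pendant path dominates — which should follow from the eigenequation~\eqref{eq:eigncomp} at $z$ and $u$ together with a monotonicity-along-pendant-path lemma. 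The main obstacle, and the reason this is only a conjecture, is precisely establishing this local eigenvector inequality: unlike the Theorem~\ref{thm 4} setup, the two vertices $z$ and $u$ have very different degrees ($z$ is a high-degree branching vertex, $u$ is near-pendant), so the sign of $x_u-x_z$ is genuinely unclear and depends delicately on $\alpha$, on $d$, and on the size $m-d-1$ of the bundle; a careful two-variable analysis of the eigenequations restricted to the short pendant path, perhaps combined with an induction on $m-d$, seems unavoidable, and degenerate small cases ($d=4$, $\alpha$ close to $1$) would have to be checked separately.
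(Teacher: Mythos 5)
There is a genuine gap, and in fact the paper itself offers no proof of this statement: it is stated explicitly as a conjecture at the end of Section~4, left open after Theorem~\ref{thm8} reduces the extremal problem over $NC(m,d)$ to the two candidates $H_1(m,d)$ and $H_2(m,d)$. Your proposal correctly identifies that reduction and correctly locates the one missing step, namely the strict comparison $\rho_{\alpha}(H_2(m,d))<\rho_{\alpha}(H_1(m,d))$. But you do not prove that comparison; you end by observing that the sign of $x_u-x_z$ is ``genuinely unclear,'' which is precisely an admission that the proof is not complete. Everything in the paper's toolkit that you invoke --- Theorem~\ref{thm 4}, Corollary~\ref{cor1}, and the Rayleigh characterization of Theorem~\ref{thm1} --- requires knowing, \emph{a priori}, an eigenvector inequality between the branching vertex $z$ and the near-pendant vertex $u$, and that inequality is exactly the open point.

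Worse, the route you call ``cleanest'' points the wrong way. Using the Perron vector $\boldsymbol{x}$ of $H_2(m,d)$ as a test vector for $\mathcal A_{\alpha}(H_1(m,d))$, each relocated pendant edge changes the quadratic form \eqref{eq:shao_spe} by $\alpha\left(x_u^k-x_z^k\right)+(1-\alpha)k\,x_{e\setminus\{z\}}\left(x_u-x_z\right)$, whose sign is that of $x_u-x_z$. In $H_2(m,d)$ the vertex $z$ has degree $m-d+1$ while $u$ is a pendant vertex of a single pendant edge, so one expects $x_z>x_u$, in which case the test-vector bound gives $\boldsymbol{x}^T\mathcal A_{\alpha}(H_1)\boldsymbol{x}<\rho_{\alpha}(H_2)$ --- a true but useless lower bound on $\rho_{\alpha}(H_1)$. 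Settling the conjecture would require a genuinely new ingredient (e.g.\ a two-sided analysis of the eigenequation \eqref{eq:eigncomp} at $z$, $u$, and the connecting edge in \emph{both} hypertrees, or a characteristic-polynomial or weighted-incidence comparison), not a rearrangement of the edge-moving lemmas already in the paper; this is presumably why the authors left it as a conjecture.
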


\begin{con}
For $G\in NC(m)$ and $0\le \alpha <1$, then $$\rho_{\alpha}(G)\le \rho_{\alpha}(H_1(m,4)).$$ Equality holds if and only if $G\cong H_1(m,4)$.
\end{con}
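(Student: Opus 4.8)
The plan is to deduce the statement from the maximality theorem already established above for $NC(m)$: every $G\in NC(m)$ satisfies $\rho_\alpha(G)\le\max\{\rho_\alpha(H_1(m,4)),\rho_\alpha(H_2(m,4))\}$, with equality only for $G\cong H_1(m,4)$ or $G\cong H_2(m,4)$. Granting this, the present assertion is equivalent to the single strict inequality
$$\rho_\alpha(H_2(m,4))<\rho_\alpha(H_1(m,4))\qquad(\,0\le\alpha<1,\ m\ge 7\,),$$
since then the maximum over $NC(m)$ equals $\rho_\alpha(H_1(m,4))$ and is attained only at $H_1(m,4)$ (for $m=6$ one has $H_1(6,4)\cong H_2(6,4)$ and there is nothing to prove). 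The same remark reduces the preceding conjecture, for fixed diameter $d$, to $\rho_\alpha(H_2(m,d))<\rho_\alpha(H_1(m,d))$ by way of Theorem \ref{thm8}; the case $d=4$ is the one needed here.

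It is worth noting at the outset why this inequality is not routine. Here $H_1(m,4)=T^k(1,m-6;2,2)$ and $H_2(m,4)=T^k(m-5,0;2,2)$, and $H_1(m,4)$ is obtained from $H_2(m,4)$ by detaching $m-6$ of the pendant edges from the vertex of largest degree and reattaching them at the degree-$3$ vertex that carries the two pendant paths of length $2$. But in the $\alpha$-Perron vector of $H_2(m,4)$ the value at the large-degree vertex \emph{exceeds} the value at that degree-$3$ vertex (and symmetrically the reverse holds in $H_1(m,4)$), so this operation is not covered by Theorem \ref{thm 4}, Corollary \ref{cor1}, Corollary \ref{cor3.2} or the $2$-switch theorem of Section 3 — the relevant eigenvector inequality points the wrong way — and, for the same reason, using either graph's Perron vector as a test vector in Theorem \ref{thm1} does not decide the comparison. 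Hence $H_1(m,4)$ and $H_2(m,4)$ have to be compared directly.

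For the direct comparison I would use the symmetry of both hypergraphs. By the eigenequation \eqref{eq:eigncomp}, the $\alpha$-Perron vector of $H_i(m,4)$ is constant on each automorphism orbit, and on a degree-one core vertex of an edge it is a rational function of the value at the unique non-core neighbour of that edge (through relations of the type $(\rho-\alpha)\,t=(1-\alpha)\cdot(\text{product of neighbour values})$). After eliminating these core values one is left, for each $i$, with a constant-size polynomial system in the values at the two branch vertices, the two midpoints of the length-two paths, and one pendant-core value; its consistency condition is an explicit equation $\Phi_i(\lambda;m,\alpha)=0$ whose largest root is $\rho_\alpha(H_i(m,4))$ — equivalently, $\rho_\alpha(H_i(m,4))$ is the Perron root of the weighted quotient matrix of the orbit partition. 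I would then substitute $\lambda=\rho_\alpha(H_1(m,4))$ into $\Phi_2$ and show that the resulting sign forces $\rho_\alpha(H_1(m,4))$ to lie strictly to the right of the largest root of $\Phi_2$, i.e. $\rho_\alpha(H_1(m,4))>\rho_\alpha(H_2(m,4))$. The sign computation would rely on the monotonicity of the rational expressions on $(\alpha,\infty)$, together with the a priori localization of $\rho_\alpha(H_1(m,4))$ and $\rho_\alpha(H_2(m,4))$ furnished by their degree sequences (which, since $\Delta(H_1(m,4))=m-3>m-4=\Delta(H_2(m,4))$ for $m\ge 8$, already settles the comparison when $\alpha$ is close enough to $1$).

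The hard part is exactly this uniform sign estimate. It has to hold for all $m\ge 7$ and all $\alpha\in[0,1)$, and the two ends of the parameter range behave rather differently — at $\alpha=0$ one has $\rho_\alpha$ of order $m^{1/k}$ with ``balanced'' equations, whereas as $\alpha\uparrow1$ one has $\rho_\alpha$ of order $\alpha m$ — so the estimate must be organized so as not to degenerate at either end; the finitely many small values of $m$ for which some parameter in $T^k(s,t;2,2)$ is degenerate are to be checked by hand. Once $\rho_\alpha(H_2(m,4))<\rho_\alpha(H_1(m,4))$ is in place, the statement, equality case included, is immediate from the $NC(m)$ maximality theorem above.
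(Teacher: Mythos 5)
First, a point of comparison that matters here: the paper does \emph{not} prove this statement --- it is explicitly posed as a conjecture at the end of Section 4, so there is no proof of the authors' to measure yours against. Your reduction is correct as far as it goes: by the theorem on $NC(m)$ proved just above it (every $G\in NC(m)$ satisfies $\rho_\alpha(G)\le\max\{\rho_\alpha(H_1(m,4)),\rho_\alpha(H_2(m,4))\}$, with equality only for the two extremal supertrees), the conjecture is equivalent to the single strict inequality $\rho_\alpha(H_2(m,4))<\rho_\alpha(H_1(m,4))$ for $m\ge 7$, and your observation that $H_1(6,4)\cong H_2(6,4)$ disposes of $m=6$. Your diagnosis of why this inequality is out of reach of the paper's machinery is also sound: passing from $H_2(m,4)$ to $H_1(m,4)$ detaches pendant edges from the vertex of largest degree, where the Perron entry is expected to be largest, so Theorem \ref{thm 4}, Corollary \ref{cor1} and Corollary \ref{cor3.2} all point the wrong way; this is presumably exactly why the authors left the statement open.

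However, the proposal does not prove the conjecture. Everything after the reduction is a plan rather than an argument: you propose to form the consistency equations $\Phi_i(\lambda;m,\alpha)=0$ from the orbit quotient, substitute $\lambda=\rho_\alpha(H_1(m,4))$ into $\Phi_2$, and ``show that the resulting sign'' forces the desired ordering --- but that sign computation, which you yourself flag as ``the hard part,'' is never carried out, and no estimate uniform in $m\ge 7$ and $\alpha\in[0,1)$ is supplied. The remark that $\Delta(H_1(m,4))=m-3>m-4=\Delta(H_2(m,4))$ settles the comparison for $\alpha$ near $1$ is only a heuristic localization, and at the other end $\alpha=0$ is precisely the adjacency case, where Guo and Zhou's comparison relied on the identity $\rho(G^k)=\rho(G)^{2/k}$, which the paper points out has no known analogue for general $\alpha$. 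Until the sign of $\Phi_2$ at $\rho_\alpha(H_1(m,4))$ is actually established across the full parameter range, the statement remains a conjecture, as it is in the paper.
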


\section{The second largest $\alpha$-spectral radius of $k$-uniform supertrees}
In this section,  we will characterize the supertree with the second largest $\alpha$-spectral radius among all $k$-uniform supertrees on $n$ vertices by two methods. In \cite{li2016extremal}, Li and Shao showed that for the adjacency spectral radius, the signless Laplacian spectral radius and the incidence
$Q$-spectral radius, $S^k(1,m-2)$ attains uniquely the second largest spectral radius among all $k$-uniform supertrees on $n$ vertices and $m$ edges.  In \cite{guo2018alpha}, Guo and Zhou  determined the hypergraph with the largest $\alpha$-spectral radius among all $k$-uniform supertrees on $n$ vertices.
\begin{thm}
(see \cite{guo2018alpha}) Suppose that $k\ge 2$. If $G$ is a $k$-uniform supertree with $m\ge 1$ edges, then $\rho_{\alpha}(G)\le \rho_{\alpha}(K_{1,m}^k)$ for $0\le \alpha <1$ with equality holds if and only if $G\cong K_{1,m}^k$.
\end{thm}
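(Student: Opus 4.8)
The plan is to prove the extremal property by a maximizer argument that exploits the two operations already available in the paper. Let $\mathcal{T}_m$ denote the (finite) family of all $k$-uniform supertrees with $m$ edges, and choose $G\in\mathcal{T}_m$ attaining the maximum $\alpha$-spectral radius. The first step is to observe that $\mathcal{T}_m$ is invariant under Operation I: relocating the edges at $v_3,\dots,v_r$ to $v_1$ neither changes the number of edges nor destroys connectivity or acyclicity, so the result is again a supertree with $m$ edges. Hence $\mathcal{T}_m$ is one of the families covered by Lemma \ref{lem2}, and the maximizer $G$ must be a power hypertree, say $G\cong T^k$ for some ordinary tree $T$ with $m$ edges. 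The $m=1$ case is immediate since the only supertree is $K_{1,1}^k=K_{1,m}^k$, so I may assume $m\ge 2$.

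Next I would show that a non-star power hypertree cannot be maximal. Let $\boldsymbol{x}$ be the principal eigenvector of $\mathcal A_\alpha(G)$ and let $u$ be a \emph{core} vertex (a vertex of the underlying tree $T$) maximizing $x_u$ among all core vertices. If $T$ is not a star, then $T$ has at least two vertices of degree $\ge 2$, so there is a core vertex $w\neq u$ with $\deg_T(w)\ge 2$; choose such a $w$ at maximum distance from $u$ in $T$. Let $e$ be the unique hyperedge at $w$ lying on the $w$–$u$ path, and apply Theorem \ref{thm 4} to move the $r=|E_G(w)\setminus\{e\}|\ge 1$ edges of $E_G(w)\setminus\{e\}$ from $w$ to $u$. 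Here every moved edge $e_i$ contains the common vertex $w$ (so all the $v_i$ coincide with $w$) but not $u$, because in a tree the only route from those edges to $u$ passes through $e$. The relocated edges $e'_i=(e_i\setminus\{w\})\cup\{u\}$ are not already present: $e'_i$ shares $k-1\ge 2$ vertices with $e_i$, so if $e'_i\in E(G)$ the pair $e_i,e'_i$ would violate the linearity of the supertree. Finally $x_u\ge x_w=\max\{x_{v_1},\dots,x_{v_r}\}$ by the choice of $u$, so Theorem \ref{thm 4} yields a supertree $G'\in\mathcal{T}_m$ with $\rho_\alpha(G')>\rho_\alpha(G)$, contradicting maximality. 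Therefore $T$ is a star and $G\cong K_{1,m}^k$, which establishes the bound $\rho_\alpha(G)\le\rho_\alpha(K_{1,m}^k)$.

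For the equality characterization I would argue that no hypergraph other than $K_{1,m}^k$ can attain the maximum. Indeed, take any $H\in\mathcal{T}_m$ with $H\not\cong K_{1,m}^k$. If $H$ is not a power hypertree, then it contains an edge with at least three vertices of degree $\ge 2$, and Operation I together with Theorem \ref{thm3} produces an $H'\in\mathcal{T}_m$ with $\rho_\alpha(H')>\rho_\alpha(H)$. If $H$ is a power hypertree but not a star, the edge-moving step of the previous paragraph applies verbatim and again gives a strict increase inside $\mathcal{T}_m$. In either case $H$ is not a maximizer, so $\rho_\alpha(H)<\rho_\alpha(K_{1,m}^k)$; equality thus forces $H\cong K_{1,m}^k$.

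The main obstacle I anticipate is the careful verification of the hypotheses of Theorem \ref{thm 4} in the edge-moving step: namely that $u$ lies outside every moved edge (which I handle by using the unique tree path and the maximal-distance choice of $w$), that each relocated edge $e'_i$ is genuinely new (which I reduce to the linearity/acyclicity of supertrees, so that two edges cannot share $k-1$ vertices), and that the chosen core vertex $u$ dominates $w$ in the eigenvector. A secondary point to state cleanly is that both Operation I and the branch-moving operation keep the hypergraph inside $\mathcal{T}_m$, so that the maximality of $\rho_\alpha(G)$ is genuinely contradicted; once these structural checks are in place, the strict inequalities supplied by Theorems \ref{thm 4} and \ref{thm3} close the argument without any eigenvalue computation.
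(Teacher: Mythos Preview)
The paper does not supply its own proof of this theorem; it is quoted from \cite{guo2018alpha} and used as a known input to Section~5. There is therefore no in-paper argument to compare your proposal against.

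That said, your proposal is correct in the setting of the paper (which fixes $k\ge 3$ throughout). Closing $\mathcal{T}_m$ under Operation~I and invoking Lemma~\ref{lem2} to force the maximiser to be a power hypertree, and then using Theorem~\ref{thm 4} with $u$ the core vertex of largest Perron weight to rule out any non-star underlying tree, is precisely the style of reduction the paper itself employs elsewhere (compare Lemma~\ref{lem4} and the proof of Theorem~\ref{thm8}). The verification that $\mathcal{T}_m$ is stable under both Operation~I and the edge relocation of Theorem~\ref{thm 4} is routine: the number of edges is preserved, connectivity is preserved because the relocated edges remain attached at $v_1$ (resp.\ $u$), and acyclicity follows since the edge count and vertex count are unchanged.

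Two minor remarks. First, the maximal-distance choice of $w$ is unnecessary: for any core vertex $w\neq u$ with $\deg_T(w)\ge 2$, the edges in $E_G(w)\setminus\{e\}$ lie in components of $T^k-w$ not containing $u$, so $u\notin e_i$ automatically. Second, your justification that $e'_i\notin E(G)$ via ``$e'_i$ shares $k-1\ge 2$ vertices with $e_i$'' uses $k\ge 3$; for $k=2$ one appeals directly to acyclicity rather than linearity. Since the paper's standing hypothesis is $k\ge 3$, this is harmless here, but worth flagging because the cited statement allows $k=2$.
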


 Let $\mathbb{T}$ be the set of $k$-uniform supertrees of order $n$. Let $N_2(G)$ be the number of non-pendant vertices of a hypergraph $G$. Denote by $\mathbb{T}_i$ the set of supertrees in $\mathbb{T}$ with $N_2(\mathbb{T})=i$.

\begin{lem}\label{lem4}
Let $T\in \mathbb {T}_{i+1}(n,k)$ be a $k$-uniform supertree of order $n$ with $N_2(T)=i+1\ge 2$. Then there exists a supertree $T'\in \mathbb {T}_i(n,k)$ such that $\rho_{\alpha}(T')>\rho_{\alpha}(T)$.
\end{lem}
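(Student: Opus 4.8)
The plan is to reduce the number of non-pendant vertices by one via an edge-relocation argument, using the principal eigenvector of $\mathcal A_\alpha(T)$ to decide the direction of the move. Since $N_2(T)=i+1\ge 2$, there are at least two non-pendant vertices; because $T$ is a supertree (connected and acyclic), I would pick two non-pendant vertices $u,w$ that lie in a common edge $e$ and are chosen so that $w$ is ``closest to a leaf'' in an appropriate sense — concretely, let $w$ be a non-pendant vertex such that, along the unique path from $w$ to the rest of the non-pendant structure, $w$ is the last non-pendant vertex, i.e. every other vertex of $T$ reachable from $w$ without passing through $u$ is pendant. Such a $w$ exists: take a non-pendant vertex maximizing the distance to a fixed non-pendant vertex. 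Then $u$ and $w$ share the edge $e$, both have degree $\ge 2$, and (after possibly also accounting for $e$ itself) the edges in $E_T(w)\setminus E_T(\{u,w\})$ are exactly the pendant edges hanging off $w$, of which there is at least one (since $w$ is non-pendant and only shares $e$ with $u$ among non-pendant-connecting edges).

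The key step is then to apply Corollary \ref{cor1} (or directly Theorem \ref{thm 4}) to $u$ and $w$: let $\boldsymbol x$ be the principal eigenvector of $\mathcal A_\alpha(T)$, and form $T'$ by moving all edges of $E_T(w)\setminus E_T(\{u,w\})$ from $w$ to $u$ if $x_u\ge x_w$, and otherwise move all edges of $E_T(u)\setminus E_T(\{u,w\})$ from $u$ to $w$. By Corollary \ref{cor1}, since $|E_T(u)\setminus E_T(\{u,w\})|\ge 1$ and $|E_T(w)\setminus E_T(\{u,w\})|\ge 1$ and $T'\ncong T$, we get $\rho_\alpha(T')>\rho_\alpha(T)$. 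It remains to check that $T'\in\mathbb T_i(n,k)$: the operation preserves the vertex set, the number of edges, acyclicity and connectedness (moving pendant edges, or moving the whole bundle at $u$ to $w$, keeps a supertree), so $T'\in\mathbb T(n,k)$; and after the move the vertex that lost its extra edges becomes pendant, while no previously pendant vertex becomes non-pendant (the relocated edges are pendant edges, which only add leaves at the target, and the target $u$ or $w$ was already non-pendant). Hence $N_2(T')=N_2(T)-1=i$.

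The main obstacle I anticipate is the careful bookkeeping in the case distinction: when $x_u< x_w$ we move the bundle at $u$ rather than at $w$, and one must verify that this still strictly decreases $N_2$ by one rather than potentially leaving $u$ non-pendant or creating a new non-pendant vertex. This is why the choice of $w$ as an ``extremal'' non-pendant vertex matters — it guarantees that in either direction of the move, exactly one of $u,w$ loses all its non-$e$ edges and becomes a leaf. If a subtlety arises (e.g. $u$ itself has other non-pendant neighbors so that moving the bundle at $u$ to $w$ does not make $u$ pendant), I would instead always choose the move so that it is $w$ — the extremal vertex with only pendant edges besides $e$ — whose edges get relocated, and handle the eigenvector inequality by noting that if $x_u < x_w$ one can still apply Theorem \ref{thm 4} in the symmetric form used in the proof of Corollary \ref{cor1}, since $\rho_\alpha$ strictly increases under the better of the two symmetric moves and the non-isomorphism $T'\ncong T$ is automatic. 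Either way the conclusion $\rho_\alpha(T')>\rho_\alpha(T)$ with $T'\in\mathbb T_i(n,k)$ follows.
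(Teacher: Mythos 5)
Your proposal is correct and follows essentially the same route as the paper's proof: pick two non-pendant vertices, move all edges at one of them (except the edge/path toward the other) in the direction dictated by comparing principal-eigenvector entries via Theorem \ref{thm 4} (equivalently Corollary \ref{cor1}), and observe that the source vertex becomes pendant while all other degrees are unchanged, so $N_2$ drops by exactly one and the resulting hypergraph is still a supertree. Your worry about the case $x_u<x_w$, and hence the ``extremal'' choice of $w$, is unnecessary: moving all edges at a vertex except the shared edge always makes that vertex pendant regardless of its neighbors, which is exactly why the paper can take an arbitrary pair of non-pendant vertices.
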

\begin{proof}
Let $u,\ v$ be two non-pendant vertices of $T$. Let $\mathbb{P}=ue_1\dots e_sv$ be a shortest path from $u$ to $v$. Let $T_1$ be the hypergraph obtained from $T$ by moving all the edges incident with $u$ (except $e_1$) from $u$ to $v$, and $T_2$ be obtained from $T$ by moving all the edges incident with $v$ (except $e_s$) from $v$ to $u$. Then both $T_1$ and $T_2$ are still supertrees (since they are still connected and have the same number of edges as $T$), and they all have one more pendant vertex than $T$. So we have
$N_2(T_j)=N_2(T)-1=i       (j=1,2)$, note that $T_j\in \mathcal T_i(n,k)\ (j=1,2),$ and $x_{u}\ge x_{v}$ or $x_{v}\ge x_{u}$. Then by Theorem \ref{thm 4} , we have $\rho_{\alpha}(T)<max\{\rho_{\alpha}(T_1),\rho_{\alpha}(T_2)\}$. Take $T'$ to be one of $T_1$ and $T_2$ with the larger spectral radius, we obtain the desired result.
\end{proof}

\begin{lem}       \label{lem3}
Let $a,b,c,d$ be nonnegative integers with $a+b=c+d$. Suppose that $a \le b, c \le d$ and $a < c$, then we have:
$$\rho_{\alpha}(S^k(a,b))> \rho_{\alpha}(S^k(c,d)).  $$
\end{lem}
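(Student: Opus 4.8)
The plan is to realize $S^k(a,b)$ as the result of relocating pendant edges between the two centers of $S^k(c,d)$, and then to invoke the edge-moving theorem (Theorem \ref{thm 4}) to conclude that this relocation strictly increases the $\alpha$-spectral radius. First I would fix notation: write $S^k(c,d)$ with central edge $e$ whose two end vertices are $u$ and $v$, where $u$ carries the $c$ pendant edges and $v$ carries the $d$ pendant edges. From $a+b=c+d$, $a\le b$, $c\le d$ and $a<c$ one extracts the chain $a<c\le d<b$; in particular $c-a=b-d>0$, so $S^k(a,b)$ is strictly more unbalanced than $S^k(c,d)$.

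The key observation is that $S^k(a,b)$ arises from $S^k(c,d)$ in either of two symmetric ways: by moving $c-a$ pendant edges from $u$ to $v$ (giving $v$ a total of $d+(c-a)=b$ pendant edges and leaving $u$ with $a$), or, using the isomorphism $S^k(a,b)\cong S^k(b,a)$, by moving $b-c$ pendant edges from $v$ to $u$ (giving $u$ a total of $c+(b-c)=b$ and leaving $v$ with $d-(b-c)=a$). Both relocations are feasible: $c-a\ge 1$ and $b-c\ge 1$, the supply constraints $c-a\le c$ and $b-c\le d$ hold (the latter because $b\le c+d$), and each relocated pendant edge, after exchanging its incident center, becomes a $k$-set not already present in the edge set, since its $k-1$ degree-one vertices were joined only to the original center. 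Thus the hypothesis $e_i'\notin E(G)$ of Theorem \ref{thm 4} is met in both directions.

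Rather than first deciding which of $x_u,x_v$ is larger in the $\alpha$-Perron vector $\boldsymbol{x}$ of $S^k(c,d)$, I would split into two cases and always move pendant edges \emph{toward} the center of larger eigenvector value. If $x_v\ge x_u$, relocate $c-a$ pendant edges from $u$ to $v$: applying Theorem \ref{thm 4} with hub $v$ and sources $v_1=\dots=v_{c-a}=u$, the condition $x_v\ge\max\{x_u\}$ holds, so $\rho_\alpha(S^k(a,b))>\rho_\alpha(S^k(c,d))$. If instead $x_u>x_v$, relocate $b-c$ pendant edges from $v$ to $u$: Theorem \ref{thm 4} now applies with hub $u$ because $x_u\ge x_v$, yielding $\rho_\alpha(S^k(b,a))>\rho_\alpha(S^k(c,d))$, and $S^k(b,a)\cong S^k(a,b)$ closes this case. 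Since these two cases exhaust all possibilities, the claimed strict inequality follows in every case.

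The only points requiring care — and which I expect to be the main (though modest) obstacle — are the bookkeeping that certifies both moves as legitimate (the counts $c-a$ and $b-c$ are positive, do not exceed the pendant-edge supply at the source center, and produce genuinely new edges), together with the clean use of the symmetry $S^k(a,b)\cong S^k(b,a)$ so that the case $x_u>x_v$ also delivers $S^k(a,b)$. Note that no sign or direction issue arises: in each case I move edges toward the center of larger eigenvector value, which is exactly the hypothesis that Theorem \ref{thm 4} demands, and moving edges in that direction increases imbalance, consistent with $S^k(a,b)$ being the more unbalanced and hence larger-$\rho_\alpha$ double star.
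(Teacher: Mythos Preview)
Your proposal is correct and follows essentially the same approach as the paper: construct $S^k(a,b)$ from $S^k(c,d)$ by relocating pendant edges between the two centers in whichever direction points toward the larger Perron coordinate, then apply Theorem~\ref{thm 4}. The paper phrases this as building both relocations $G',G''$ (moving $c-a$ edges from $u$ to $v$, or $d-a=b-c$ edges from $v$ to $u$) and taking $\max\{\rho_\alpha(G'),\rho_\alpha(G'')\}$, which is the same argument packaged slightly differently; your additional bookkeeping (feasibility of counts, $e_i'\notin E(G)$) is more careful than what the paper spells out.
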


\begin{proof}
Let $u,\ v$ be the two non-pendant vertices of $S^k(c,d)$ with the degrees $d(u)=c+1$ and $d(v)=d+1$. Let $G'$ be obtained from $S^k(c,d)$ by moving $c-a$ pendant edges from $u$ to $v$, and $G''$ be obtained from $S^k(c,d)$ by moving $d-a$ pendant edges from $v$ to $u$. Then both $G'$ and $G''$ are isomorphic to $S^k(a,b)$.

Here $x_u\ge x_v$ or $x_v\ge x_u$, by Theorem \ref{thm 4}, we have
$$\rho_{\alpha}(S^k(a,b))=max(\rho_{\alpha}(G'),\rho_{\alpha}(G'')) > \rho_{\alpha}(S^k(c,d)).$$
\end{proof}

\begin{thm}
Let $T$ be a $k$-uniform supertree on $n$ vertices with $m$ edges, suppose that $T\ncong K_{1,m}^k$, then we have $$\rho_{\alpha}(T)\le \rho_{\alpha}(S^k(1,m-2))$$  where the equality holds if and only if $T\cong S^k(1,m-2)$.
\end{thm}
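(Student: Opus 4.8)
The plan is to prove the theorem by reducing an arbitrary supertree $T \ncong K_{1,m}^k$ to a double star, and then comparing double stars among themselves. The key structural fact is that $S^k(1,m-2)$ has exactly two non-pendant vertices, while $K_{1,m}^k$ has exactly one; any supertree $T$ with $T\ncong K_{1,m}^k$ must have $N_2(T)\ge 2$. So the argument naturally splits along the value of $N_2(T)$.

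First I would invoke Lemma \ref{lem4} repeatedly: starting from $T$ with $N_2(T)=i+1 \ge 2$, each application produces a supertree of order $n$ with one fewer non-pendant vertex and strictly larger $\alpha$-spectral radius. Iterating, I obtain a supertree $\widehat{T}$ with $N_2(\widehat{T})=2$ and $\rho_\alpha(T)\le \rho_\alpha(\widehat{T})$, with equality only possible if no reduction step was actually needed, i.e. if $T$ already had $N_2(T)=2$. A supertree on $n$ vertices (hence with $m=\frac{n-1}{k-1}$ edges) having exactly two non-pendant vertices $u,v$ must, since it is linear and connected, consist of a path of some length $s\ge 1$ joining $u$ to $v$ together with pendant edges at $u$ and at $v$; but if $s\ge 2$ there would be an internal (non-pendant) vertex on that path, contradicting $N_2=2$. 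Hence $s=1$, $u$ and $v$ lie in a common edge, and $\widehat{T}\cong S^k(a,b)$ for some $a,b\ge 0$ with $a+b=m-2$.

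Next I would apply Lemma \ref{lem3}: among all $S^k(a,b)$ with $a+b=m-2$, the one with the most balanced split does \emph{not} maximize $\rho_\alpha$; rather, pushing toward the extreme $a=0$ (which is $K_{1,m-1}^k$ with an extra pendant edge — actually $S^k(0,m-2)\cong K_{1,m}^k$) strictly increases $\rho_\alpha$. Since $\widehat{T}\ncong K_{1,m}^k$ forces $a\ge 1$, Lemma \ref{lem3} with the comparison $(1,m-3)$ versus $(a,b)$ for $a\ge 2$ gives $\rho_\alpha(S^k(a,b))\le \rho_\alpha(S^k(1,m-3))$... wait — $S^k(1,m-3)$ has $m-1$ edges, not $m$. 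The correct normalization is $a+b+1=m$ edges for $S^k(a,b)$, so $S^k(1,m-2)$ is the relevant object and $a+b = m-1$; with $a\ge 1$, Lemma \ref{lem3} applied to $a<c$ style comparisons yields $\rho_\alpha(S^k(a,b))\le \rho_\alpha(S^k(1,m-2))$, with equality iff $(a,b)=(1,m-2)$ (up to symmetry). Combining with the previous paragraph, $\rho_\alpha(T)\le \rho_\alpha(\widehat{T})\le \rho_\alpha(S^k(1,m-2))$.

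For the equality characterization, equality throughout forces both that no Lemma \ref{lem4} reduction was needed — so $N_2(T)=2$ and $T\cong S^k(a,b)$ — and that the Lemma \ref{lem3} comparison was an equality, so $(a,b)=(1,m-2)$; hence $T\cong S^k(1,m-2)$. The main obstacle I anticipate is bookkeeping the edge count under the power operation and making sure the double-star reduction in the second paragraph is airtight: one must verify that a linear connected supertree with exactly two non-pendant vertices genuinely has those two vertices in a common edge (the "$s=1$" step), which uses that any vertex interior to a loose path of length $\ge 2$ has degree $\ge 2$ in the path and hence is non-pendant. Everything else is a direct application of the already-established Theorem \ref{thm 4}-based lemmas.
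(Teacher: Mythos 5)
Your proposal is correct and is essentially the paper's first proof: iterate Lemma \ref{lem4} to reduce to a supertree with exactly two non-pendant vertices, observe (as the paper does) that such a supertree must be a double star $S^k(a,b)$ with the two non-pendant vertices in a common edge, and then conclude with Lemma \ref{lem3}; your equality analysis also matches. The only blemish is the momentary confusion over the edge-count normalization ($a+b=m-1$ for $S^k(a,b)$ with $m$ edges), which you correctly resolve in the course of the argument.
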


\begin{proof} \textbf{(Methods one)}\\
 We use induction on the number of non-pendant vertices $N_2(T)$. Since $T\ncong K_{1,m}^k$, we have $N_2(T) \ge 2$. Now we assume that $T\ncong S^k(1,m-2)$.

If $N_2(T) = 2$, then the two non-pendant vertices $u$ and $v$ of $T$ must be adjacent. Otherwise, all the internal vertices of the path between $u$ and $v$ would be non-pendant vertices other than $u$ and $v$, contradicting $N_2(T) = 2$. And so it can be easily verified that $T\cong S^k(c,d)$ for some positive integers $2 \le c \le d$ with $c+d=m-1$ ($2 \le c $ since $T\ncong S^k(1,m-2))$. So by Lemma \ref{lem3}, we get the desired results.

If $N_2(T) \ge 3$, then by Lemma \ref{lem4}, there exists a supertree $T'\in \mathbb{T}_{i-1}(n,k)$ such that $\rho_{\alpha}(T')>\rho_{\alpha}(T)$. Repeating this procedure, we arrive at a $T''\in T_2(n,k)$ such that $\rho_{\alpha}(T)<\rho_{\alpha}(T'')\le \rho_{\alpha}(S^k(1,m-2))$. This completes the proof.
\end{proof}

\begin{proof}\textbf{(Methods two)}\\
Let $T$ be a $k$-uniform supertree with maximum $\alpha$-spectral radius among $k$-uniform supertrees on $n$ vertices with $m$ edges except the hyperstar $K_{1,m}^k$.

Let $d$ be the diameter of $T$. Since $T\ncong K_{1,m}^k$, we have $d\ge 3$. Suppose that $d\ge 4$. Let $v_0e_1v_1\dots e_dv_d$ be a diametral path of $T$. Choose $v_t,v_{t+1}\in \{v_1,\dots,v_{d-1}\}$, without loss of generality, assume that $x_{v_t}\ge x_{v_{t+1}}$. Let $T_1$ be the $k$-uniform supertree obtained from $T$ by moving the edges incident $v_{t+1}$ except $e_t$ from $v_{t+1}$ to $v_t$. Obviously, $T_1\ncong K_{1,m}^k$. By Theorem \ref{thm 4}, we have $\rho_{\alpha}(T)<\rho_{\alpha}(T_1)$, a contradiction. Thus $d=3$ and $T\cong S^k(c,d)$. By Lemma \ref{lem3}, we have $T\cong S^k(1,m-2)$.
\end{proof}
\begin{cor}
$S^k(1,m-2)$ is the unique supertree which achieves the second maximum spectral radius among all $k$-uniform supertree on $n$ vertices with $m$ edges.
\end{cor}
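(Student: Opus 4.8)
The final statement is the corollary asserting that $S^k(1,m-2)$ uniquely achieves the second largest $\alpha$-spectral radius among all $k$-uniform supertrees on $n$ vertices with $m$ edges. Since the preceding theorem already establishes that any supertree $T\ncong K_{1,m}^k$ satisfies $\rho_\alpha(T)\le\rho_\alpha(S^k(1,m-2))$ with equality iff $T\cong S^k(1,m-2)$, and since $\rho_\alpha(K_{1,m}^k)$ is strictly larger than $\rho_\alpha(S^k(1,m-2))$ by the theorem applied to $T=S^k(1,m-2)$ (or by the quoted theorem of Guo--Zhou on the unique maximizer), the plan is simply to assemble these two facts. I would first recall that $K_{1,m}^k$ is the unique supertree with the largest $\alpha$-spectral radius, so ``second largest'' is well defined and is attained among supertrees not isomorphic to $K_{1,m}^k$. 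Then I would invoke the previous theorem to conclude that within this class the maximum is $\rho_\alpha(S^k(1,m-2))$, attained uniquely by $S^k(1,m-2)$.

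The only mild subtlety, and the step I would be most careful about, is making sure the strict inequality $\rho_\alpha(S^k(1,m-2))<\rho_\alpha(K_{1,m}^k)$ actually holds, so that $S^k(1,m-2)$ genuinely sits strictly below the top and thus occupies the second slot rather than tying for first. This is immediate: applying the previous theorem with $T=S^k(1,m-2)$ (which is not isomorphic to $K_{1,m}^k$ whenever $m\ge 3$, the only case in which the two trees differ) gives $\rho_\alpha(S^k(1,m-2))\le\rho_\alpha(K_{1,m}^k)$, and the equality case of that theorem would force $S^k(1,m-2)\cong K_{1,m}^k$, which is false; hence the inequality is strict. For $m\le 2$ the statement is vacuous or trivial since there is essentially only one supertree. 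No new computation is needed.

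Concretely, the write-up would read: by the theorem of Guo and Zhou, $K_{1,m}^k$ is the unique $k$-uniform supertree of maximum $\alpha$-spectral radius, so any supertree attaining the second largest value is not isomorphic to $K_{1,m}^k$; by the preceding theorem, every such supertree $T$ satisfies $\rho_\alpha(T)\le\rho_\alpha(S^k(1,m-2))$ with equality precisely when $T\cong S^k(1,m-2)$; and since $\rho_\alpha(S^k(1,m-2))<\rho_\alpha(K_{1,m}^k)$, the value $\rho_\alpha(S^k(1,m-2))$ is indeed strictly smaller than the maximum, hence it is the second largest, uniquely realized by $S^k(1,m-2)$. Thus there is no real obstacle here — the corollary is a direct repackaging of the theorem just proved together with the known characterization of the maximizer; the ``hard part'' was already done in establishing the theorem via the two methods (induction on $N_2(T)$ using Lemmas \ref{lem4} and \ref{lem3}, or the diameter-reduction argument using Theorem \ref{thm 4}).
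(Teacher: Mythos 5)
Your proposal is correct and matches the paper's intent: the corollary is stated without proof precisely because it is the immediate assembly of the preceding theorem with the Guo--Zhou result that $K_{1,m}^k$ is the unique maximizer, which is exactly what you do. Your extra care about the strictness of $\rho_\alpha(S^k(1,m-2))<\rho_\alpha(K_{1,m}^k)$ is a reasonable (and correct) addition, but does not change the route.
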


By this point, we give a positive answer to Question \eqref{a}: the supertrees with the first two largest spectral radii  also are supertrees with the first two largest $\alpha$- spectral radii.

\addcontentsline{toc}{section}{References}

\bibliography{References}

\end{document}